\documentclass[11pt]{amsart}

\usepackage{epigamath}


\usepackage[english]{babel}




\usepackage{makebox} 
\usepackage{bm} 
\usepackage{mathrsfs}
\usepackage{mathdots}
\usepackage{calc}
\usepackage{nicefrac} 
\usepackage{textcomp} 
\usepackage{mathtools} 
\usepackage{tikz}
\usepackage{tikz-cd} 
\usetikzlibrary{calc}
\usetikzlibrary{matrix,arrows}
\usetikzlibrary{decorations.markings}


\newtheorem{Thm}{Theorem}[section]              

\newtheorem{Lemma}[Thm]{Lemma}
\newtheorem{Cor}[Thm]{Corollary}

\newtheorem{thm}{Theorem}
\newtheorem{question}{Question}

\theoremstyle{definition}
\newtheorem{Rmk}[Thm]{Remark}

\newtheorem{Def}[Thm]{Definition}
\newtheorem{Ex}[Thm]{Example}
\newtheorem{Ax}[Thm]{Axiom}


\makeatletter \tikzcdset{
  open/.code     = {\tikzcdset{hook, circled};},
  closed/.code   = {\tikzcdset{hook, slashed};},
  open'/.code    = {\tikzcdset{hook', circled};},
  closed'/.code  = {\tikzcdset{hook', slashed};},
  circled/.code  = {\tikzcdset{markwith = {\draw (0,0) circle (.375ex);}};},
  slashed/.code  = {\tikzcdset{markwith = {\draw[-] (-.4ex,-.4ex) -- (.4ex,.4ex);}};},
  markwith/.code ={
    \pgfutil@ifundefined%
    {tikz@library@decorations.markings@loaded}%
    {\pgfutil@packageerror{tikz-cd}{You need to say %
      \string\usetikzlibrary{decorations.markings} to use arrows with markings}{}}{}%
    \pgfkeysalso{/tikz/postaction = {
      /tikz/decorate,
      /tikz/decoration={markings, mark = at position 0.5 with {#1}}}
    }
  },
}
\tikzcdset{
  shorten/.style={
    /tikz/shorten <=#1,
    /tikz/shorten >=#1},
} \makeatother

\renewcommand{\H}{\mathcal H}
\newcommand{\K}{\mathcal K}
\renewcommand{\L}{\mathbf L}
\newcommand{\A}{\mathbf A}
\renewcommand{\P}{\mathbf P}
\newcommand{\Z}{\mathbf Z}
\newcommand{\Q}{\mathbf Q}
\newcommand{\F}{\mathbf F}
\newcommand{\C}{\mathbf C}
\newcommand{\cs}{_{\text{\normalfont c}}}

\newcommand{\Hom}{\operatorname{Hom}}

\newcommand{\End}{\operatorname{End}}
\newcommand{\Ext}{\operatorname{Ext}}
\newcommand{\Mor}{\operatorname{Mor}}
\newcommand{\Tot}{\operatorname{Tot}}
\newcommand{\Spec}{\operatorname{Spec}}

\newcommand{\CH}{\operatorname{CH}}
\newcommand{\et}{_{\operatorname{\acute et}}}
\newcommand{\proet}{_{\operatorname{pro-\acute et}}}
\newcommand{\cl}{\operatorname{cl}}
\newcommand{\op}{^{\operatorname{op}}}
\newcommand{\even}{^{\operatorname{even}}}
\newcommand{\odd}{^{\operatorname{odd}}}
\newcommand{\tens}{\otimes}
\newcommand{\id}{\operatorname{id}}
\newcommand{\pt}{\operatorname{pt}}
\newcommand{\Corr}{\operatorname{Corr}}
\newcommand{\Rk}{\operatorname{Rk}}
\newcommand{\rk}{\operatorname{rk}}
\newcommand{\im}{\operatorname{im}}
\newcommand{\coker}{\operatorname{coker}}
\newcommand{\codim}{\operatorname{codim}}
\newcommand{\tr}{\operatorname{tr}}

\newcommand{\Dim}{\operatorname{Dim}}

\newcommand{\Kun}{\operatorname{K\ddot un}}
\newcommand{\Cl}{\operatorname{Cl}}
\newcommand{\Irr}{\operatorname{Irr}}
\newcommand{\cto}{\vdash}
\newcommand{\cyl}{\operatorname{cyl}}
\newcommand{\sing}{^{\operatorname{sing}}}

\newcommand{\rA}{\longrightarrow}
\newcommand{\Ra}{\Rightarrow}
\newcommand{\RA}{\Longrightarrow}

\newcommand{\LRa}{\Leftrightarrow}

\newcommand{\Ab}{\mathbf{Ab}}
\newcommand{\Set}{\mathbf{Set}}

\newcommand{\Topos}{\mathbf{Topos}}
\newcommand{\SHV}{\mathbf{Shv}}
\newcommand{\COMP}{\mathbf{Comp}}
\newcommand{\Ch}{\mathbf{Ch}}
\renewcommand{\Vec}{\mathbf{Vec}}
\newcommand{\gVec}{\mathbf{gVec}}
\newcommand{\gAlg}{\mathbf{gAlg}}
\newcommand{\Sch}{\mathbf{Sch}}
\newcommand{\SmPr}{\mathbf{SmPr}}
\newcommand{\SmPrVar}{\mathbf{SmPrVar}}
\newcommand{\M}{\mathbf{Mot}}
\newcommand{\aon}{^{\text{aon}}}

\newcommand{\punct}[1]{\makebox[0pt][l]{\,#1}} 

\newcommand{\fancyitem}[1]{%
 \renewcommand{\theenumi}{#1}
 \refstepcounter{enumi}
 \item[\rm #1]
}
\newcommand{\initiate}[1]{\setcounter{#1}{0}}


\EpigaVolumeYear{4}{2020} \EpigaArticleNr{16} \ReceivedOn{June 12,
2019}
\InFinalFormOn{August 1, 2020} \AcceptedOn{September 2, 2020}

\title{The equivalence of several conjectures on independence of $\ell$}
\titlemark{The equivalence of several conjectures on independence of $\ell$}

\author{Remy van Dobben de Bruyn}
\address{Department of Mathematics, Princeton University, Princeton, NJ 08544, United States of America}
\address{Institute for Advanced Study, Princeton, NJ 08540, United States of America}
\email{rdobben@math.princeton.edu}

\authormark{R. van Dobben de Bruyn}

\AbstractInEnglish{We consider several conjectures on the
independence of $\ell$ of the \'etale cohomology of (singular, open)
varieties over $\bar \F_p$. The main result is that independence of
$\ell$ of the Betti numbers $h^i\cs(X,\Q_\ell)$ for arbitrary
varieties is equivalent to independence of $\ell$ of homological
equivalence $\sim_{\text{hom},\ell}$ for cycles on smooth projective
varieties. We give several other equivalent statements. As a
surprising consequence, we prove that independence of $\ell$ of
Betti numbers for smooth quasi-projective varieties implies the same
result for arbitrary separated finite type $k$-schemes.}

\MSCclass{14F20 (primary); 14F30, 14C15, 14G15 (secondary)}

\KeyWords{Mathematics; algebraic geometry; arithmetic geometry;
\'etale cohomology; positive characteristic; motives; independence
of $\ell$}

\TitleInFrench{L'\'equivalence de plusieurs conjectures sur
l'ind\'ependance de $\ell$}

\AbstractInFrench{Nous consid\'erons plusieurs conjectures sur
l'ind\'ependance de $\ell$ pour la cohomologie \'etale des
vari\'et\'es (singuli\`eres, ouvertes) sur $\bar \F_p$. Le
r\'esultat principal est que l'ind\'ependance de $\ell$ des nombres
de Betti $h^i\cs(X,\Q_\ell)$ pour les vari\'et\'es arbitraires est
\'equivalente \`a l'ind\'ependance de $\ell$ de l'\'equivalence
homologique $\sim_{\text{hom},\ell}$ pour les cycles sur les
vari\'et\'es projectives lisses. Nous donnons plusieurs autres
\'enonc\'es \'equivalents. Comme cons\'equence surprenante, nous
prouvons que l'ind\'ependance de $\ell$ des nombres de Betti pour
les vari\'et\'es quasi-projectives lisses implique le m\^eme
r\'esultat pour les $k$-sch\'emas s\'epar\'es de type fini.}


\acknowledgement{Part of
this work was carried out with support of the Oswald Veblen Fund at
the Institute for Advanced Study.}


\begin{document}



\maketitle

\begin{prelims}

\DisplayAbstractInEnglish

\bigskip

\DisplayKeyWords

\medskip

\DisplayMSCclass

\bigskip

\languagesection{Fran\c{c}ais}

\bigskip

\DisplayTitleInFrench

\medskip

\DisplayAbstractInFrench

\end{prelims}


\newpage

\setcounter{tocdepth}{2}

\tableofcontents


\section*{Introduction}\addcontentsline{toc}{section}{Introduction}
Let $k$ be a field, and let $X$ be a $k$-variety. For every prime
number $\ell$ invertible in $k$, there is an associated \'etale
cohomology group $H^*\cs(X_{\bar k},\Q_\ell)$ defined using the
geometry of $\ell$-power degree covers of $X$. The main question we
want to consider is the following.

\begin{question}
Given a variety $X$ over a field $k$, is the dimension
$h^i\cs(X_{\bar k},\Q_\ell)$ of $H^i\cs(X_{\bar k},\Q_\ell)$
independent of the prime $\ell$?
\end{question}

If $k = \C$ and $X$ is smooth, this easily follows from the
functorial comparison isomorphisms
\cite[exp.~XI,~th.~4.4(iii)]{SGA4III}
\[
H^*\et(X,\Q_\ell) \cong H^*_{\operatorname{sing}}(X(\C),\Q)
\otimes_\Q \Q_\ell.
\]
The result for arbitrary $X$ over $\C$ can be deduced from this
using \makebox{hypercoverings}, cf.~\cite[6.2.8]{Hdg3}. The
Lefschetz principle proves the result for any field $k$ of
characteristic $0$, since \'etale cohomology is insensitive to
extensions of algebraically closed fields
\cite[exp.~XVI,~cor.~1.6]{SGA4III}.

On the other hand, if $k$ is finite and $X$ is smooth and proper,
then the Weil conjectures \cite{WeilI}, \cite{WeilII} imply that
$h^i\et(X_{\bar k},\Q_\ell)$ can be read off from the zeta function
of $X$, and thus does not depend on $\ell$. The question for
arbitrary $k$-varieties $X$ is a well-known open problem
\cite[p.~28,~(2a)]{Katz}, \cite[3.5(c)]{Ill}.

The homological standard conjecture \cite[\S4, Remarks~(3)]{Gro} is known
to imply the result in the following two cases:
\begin{enumerate}
\item[\rm (i)] $X$ is proper;
\item[\rm (ii)] $X$ is the complement of a simple normal crossings divisor $D$ in a smooth projective variety $\bar X$.
\end{enumerate}
Indeed, (ii) is explained in \cite[p.~28--29]{Katz}, and (i) is an
application of de Jong's alterations \cite{dJ}. Even assuming the
homological standard conjecture, the result for an arbitrary variety
$X$ does not seem to appear in the literature (although it may have
been known to experts). One cannot simply combine the arguments of
(i) and~(ii); see Remark \ref{Rmk combine proofs}.

We improve these conditional results in three ways:
\begin{itemize}
\item we replace the homological standard conjecture by a weaker assumption;
\item we prove independence of $\ell$ of $h^i\cs(X_{\bar k},\Q_\ell)$ for \emph{every} separated finite type $k$-scheme~$X$;
\item we prove a converse as well.
\end{itemize}

\begin{thm}\label{Thm independence}
Let $k$ be an algebraically closed field. If $k = \bar \F_p$, then
the following are equivalent:
\begin{enumerate}
\fancyitem{(\arabic{enumi})} \label{Item main 1} For every smooth
projective $k$-scheme $X$, the kernel of the cycle class map $\cl
\colon \CH^*_\Q(X) \to H^*(X,\Q_\ell)$ is independent of $\ell$;
\fancyitem{(\arabic{enumi})} \label{Item main 2} For all smooth
projective $k$-schemes $X$ and $Y$, any $\alpha \in \CH^*_\Q(X
\times Y)$, and any $i$, the rank of $\alpha_* \colon H^i(X,\Q_\ell)
\to H^*(Y,\Q_\ell)$\footnote{We do not write $H^i(Y,\Q_\ell)$
because $\alpha_*$ does not always take $H^i$ to $H^i$; see
Definition \ref{Def pushforward on Weil cohomology}.} is independent
of $\ell$;
\initiate{enumi} \fancyitem{(3\alph{enumi})} \label{Item main 3a}
For every separated finite type $k$-scheme $X$ and any $i$, the
dimension of $H^i\cs(X,\Q_\ell)$ is independent of $\ell$;
\fancyitem{(3\alph{enumi})} \label{Item main 3b} For every smooth
quasi-projective $k$-scheme $X$ and any $i$, the dimension of
$H^i\cs(X,\Q_\ell)$ is independent of $\ell$.
\end{enumerate}
Moreover, if these hold when $k = \bar \F_p$ for some prime $p$
(resp.\ for every prime $p$), then they hold over any algebraically
closed field of characteristic $p$ (resp.\ any algebraically closed
field).
\end{thm}

This result is given in Theorem \ref{Thm equivalent} and Remark
\ref{Rmk other fields} below. This gives many new angles to the
independence of $\ell$ question. The implication $\ref{Item main 3b}
\Ra \ref{Item main 3a}$ is particularly surprising; the proof goes
through \ref{Item main 1} and \ref{Item main 2}.

We also have an extension to crystalline cohomology. In fact, we
work with an arbitrary Weil cohomology theory (see Definition
\ref{Def Weil cohomology theory}) satisfying some additional axioms
(see Axiom \ref{Def additional axioms}), at the expense of
restricting to $k = \bar \F_p$. (Developing `Weil cohomology
theories with specialisation' would take us too far afield.)

However, our methods do not say anything about independence of
$\ell$ of the dimensions $h^i(X,\Q_\ell)$ of the (usual) cohomology
groups $H^i(X,\Q_\ell)$, except in the proper (resp.~smooth) case
where it coincides with (resp.~is dual to) compactly supported
cohomology.

The idea of $\ref{Item main 1} \Ra \ref{Item main 2}$ is that the
rank of a linear map $f \colon V \to W$ is the largest \makebox{$r
\in \Z_{\geq 0}$} such that $\bigwedge^r f \neq 0$. Although the
functors \makebox{$H^*(-,\Q_\ell) \colon \M_k \to \gVec$} do not
preserve wedge products (see Remark \ref{Rmk realisation functor}),
algebraicity of the K\"unneth projectors \cite{KM} decomposes a
cycle $\alpha \in \CH^*_\Q(X \times Y)$ as $\alpha_{\text{even}}
\oplus \alpha_{\text{odd}}$. Then $\bigwedge^r \alpha_{\text{even}}$
(resp.~$S^r \alpha_{\text{odd}}$) acts on cohomology as $\bigwedge^r
\alpha_{\text{even},*}$ (resp.~$\bigwedge^r \alpha_{\text{odd},*}$),
so the rank of the map $\alpha_* \colon H^*(X,\Q_\ell) \to H^*(Y,\Q_\ell)$
is determined by the vanishing or nonvanishing of
$\cl(\bigwedge^r\alpha_{\text{even}})$ and $\cl(S^r
\alpha_{\text{odd}})$ for various $r$.

To prove $\ref{Item main 2} \Ra \ref{Item main 3a}$, we use a
variant of the classical hypercovering argument
\cite[exp.~V$^{\text{bis}}$]{SGA4II}, \cite[\S5,6]{Hdg3}: if
$X_\bullet \to X$ is a proper hypercovering, then there is a
hypercohomology spectral sequence
\begin{equation}
E_1^{p,q} = H^q(X_p,\Q_\ell) \Ra H^{p+q}(X,\Q_\ell).\label{Eq
hypercovering spectral sequence}
\end{equation}
If each $X_p$ is smooth projective, then \eqref{Eq hypercovering
spectral sequence} degenerates on the $E_2$ page for weight reasons,
so $h^i(X,\Q_\ell)$ is determined by the ranks of the maps on the
$E_1$ page.

However, a proper hypercovering by smooth projective schemes can
only exist if $X$ is proper. In general, again using de Jong's
alterations \cite{dJ}, one can construct a proper hypercovering
$X_\bullet \to X$ where each $X_i$ is the complement of a simple
normal crossings divisor $Z_i$ in a smooth projective $k$-scheme
$\bar X_i$. There is a different spectral sequence
\cite[p.~28--29]{Katz} computing the compactly supported cohomology
of $X_i$ in terms of $\bar X_i$ and the components of $Z_i$; its
dual then computes the cohomology of $X_i$. However, if we then
compute \eqref{Eq hypercovering spectral sequence}, the purity
argument no longer applies.

Instead, we choose a compactification $X \to \bar X$ first, with
closed complement $V$, and we produce a morphism of simplicial
schemes $v_\bullet \colon V_\bullet \to \bar X_\bullet$, where
$V_\bullet$ (resp.~$\bar X_\bullet$) is a hypercovering of $V$
(resp.~$\bar X$). Then the simplicial mapping cone of $v_\bullet$
computes the compactly supported cohomology of $X$, by comparing the
long exact sequence for the mapping cone with that for the triple
$(\bar X,X,V)$. This allows us to apply the purity theorem as in the
argument above for $X$ proper, which finishes the proof of
$\ref{Item main 2} \Ra \ref{Item main 3a}$.

Finally, for the implication $\ref{Item main 3b} \Ra \ref{Item main
1}$ we prove that any cycle $\alpha \in \CH_d(X)$ can be written as
a difference $[Z_1] - [Z_2]$ with the $Z_i$ irreducible; see
Corollary \ref{Cor difference of irreducible cycles}. Letting $U$ be
the complement of $Z_1 \cup Z_2$, we relate the vanishing of
$\cl(\alpha)$ to the dimension of $H^{2d+1}\cs(U,\Q_\ell)$. There
are only two possible cases depending on whether $\cl(Z_1)$ and
$\cl(Z_2)$ are linearly independent or linearly dependent; in the
latter case the linear relation is determined by intersection
numbers.

\subsection*{Outline of the paper}
In Section \ref{Sec motives} we give a brief review of Weil
cohomology theories (Definition \ref{Def Weil cohomology theory})
and pure motives (Definition \ref{Def Chow motive}). Section
\ref{Sec simplicial} contains a review of simplicial schemes and
mapping cones, which play a role\footnote{Weizhe Zheng provided an
easier argument without mapping cones; see Remark \ref{Rmk WZ}. We
kept the original argument as it proves more, and the techniques
might be useful elsewhere.} in $\ref{Item main 2} \Ra \ref{Item main
3a}$. In Section \ref{Sec all-or-nothing} we state the additional
axioms on our Weil cohomology theory for the arguments to work; see
Axiom \ref{Def additional axioms}.

The main theorem will be stated in Section \ref{Sec independence}
(see Theorem \ref{Thm equivalent}). We then proceed to prove the
implications of Theorem \ref{Thm equivalent} as outlined in the
introduction above, in the following cyclic order:
\[
\ref{Item main 1} \RA \ref{Item main 2} \RA \ref{Item main 3a} \RA
\ref{Item main 3b} \RA \ref{Item main 1}.
\]
The implication $\ref{Item main 3a} \Ra \ref{Item main 3b}$ is
trivial; each of the others will take up one section (Section
\ref{Sec Cycle classes and ranks}, Section \ref{Sec Ranks and
dimensions}, and Section \ref{Sec Dimensions and cycle class maps}
respectively).

\phantomsection
\subsection*{Notation and conventions}\label{Sec Notation}
If $k$ is a field, then a \emph{$k$-variety} will mean a finite
type, separated, geometrically integral $k$-scheme. A pair $(X,H)$
is called a projective $k$-scheme if $X$ is a projective $k$-scheme
and $H$ a very ample divisor on $X$.

In the main theorems, the base field $k$ will be assumed
algebraically closed, because standard references on Weil cohomology
theories have this running assumption, and establishing the general
framework would take us too far astray.

The category of smooth projective $k$-varieties will be denoted by
$\SmPrVar_k$, and the category of smooth projective $k$-schemes will
be denoted by $\SmPr_k$. The latter can be obtained from the former
as the category of formal finite coproducts (if $k$ is algebraically
closed), cf.~Example \ref{Ex Coprod}. The category of Chow motives
is denoted by $\M_k$; its definition will be recalled in Definition
\ref{Def Chow motive}. Morphisms in this category are typically
denoted by $\alpha \colon X \cto Y$.

If $K$ is a field, then $\Vec_K$ denotes the category of $K$-vector
spaces, $\gVec_K$ the category of $\Z$-graded $K$-vector spaces, and
$\gAlg_K$ the category of $\Z$-graded (unital, associative)
$K$-algebras. The objects of $\gAlg_K$ we encounter will always be
graded-commutative and vanish in negative degrees.

We write $\Ab(\mathscr C)$ for the category of abelian group objects
in a category $\mathscr C$ with finite products. All
topoi\footnote{We work with topoi instead of sites because they have
better formal properties, but all arguments could also be carried
out using Grothendieck (pre)topologies. We ignore set-theoretic
issues; they can for instance be dealt with using universes.} will
be Grothendieck topoi, i.e.~the topos of sheaves (of sets) on a
small category with a Grothendieck topology (or pretopology). We
write $\Topos$ for the (strict) $2$-category of topoi, whose objects
are topoi, whose $1$-morphisms are (geometric) morphisms of topoi,
and whose $2$-morphisms are natural transformations between the
inverse image functors (equivalently, between the direct image
functors).

We write $\SHV$ for the (strict) $2$-category whose objects are
pairs $(X,\mathscr F)$ of a topos $X$ with an abelian object
$\mathscr F$ in $X$, whose $1$-morphisms $(X,\mathscr F) \to
(Y,\mathscr G)$ are pairs $(f,\phi)$ of a $1$-morphism $f \colon X
\to Y$ and a morphism $\phi \colon f^* \mathscr G \to \mathscr F$ of
abelian objects, and whose $2$-morphisms $(f,\phi) \to (g, \psi)$
are given by natural transformations $\eta \colon f^* \Ra g^*$ such
that $\psi \circ \eta_{\mathscr G} = \id_{\mathscr F} \circ \phi$.
We think of it as a ``fibred $2$-category'' $\SHV \to \Topos$, whose
fibre above the topos $X$ is $\Ab(X)\op$ (with only identity
$2$-morphisms). In a similar way, we define a category $\COMP$ of
pairs $(X,K)$ of a topos $X$ with a complex $K$ of abelian objects
on $X$.

\subsection*{Acknowledgements}
{\small This paper grew out of a conversation with Ashwin Deopurkar
on the existence of exterior powers in the category of pure motives.
I also thank Raymond Cheng, Johan Commelin, and Johan de Jong for
helpful conversations. I am particularly grateful to Weizhe Zheng
for providing an alternative and easier proof of $\ref{Item main 2}
\Ra \ref{Item main 3a}$; see Remark \ref{Rmk WZ}. Finally, I thank
the referee for many helpful suggestions and corrections. }

\numberwithin{equation}{section}

\section{Pure motives and Weil cohomology theories}\label{Sec motives}
This is a review of the theory of pure motives, cf.\ e.g.\ Kleiman \cite{KleAG}, Jannsen \cite{Jan}, or Scholl's excellent survey \cite{Sch}. We also give a brief review of Weil cohomology theories; see \cite[\S 3]{KleMot} for more details. 
%
Following standard references, we will assume that $k$ is
algebraically closed. Our setup is slightly more general than
\cite[\S 3]{KleMot}, in that we allow smooth projective $k$-schemes
with multiple components.

\begin{Def}\label{Def Weil cohomology theory}
Let $k$ be an algebraically closed field, and let $K$ be a field of
characteristic $0$. A \emph{Weil cohomology theory} is a functor $H
\colon \SmPr_k\op \to \gAlg_K$ satisfying the following axioms.
\renewcommand{\theenumi}{(W\arabic{enumii})}
\begin{enumerate}
\fancyitem{\rm (W\arabic{enumi})} \label{Ax W1} Each $H^i(X)$ is
finite-dimensional and vanishes for $i < 0$ and $i > 2 \dim X$;
\fancyitem{\rm (W\arabic{enumi})} \label{Ax W2} There is a trace map
$\tr_X \colon H^{2\dim X}(X) \to K$ that is an isomorphism if $X$ is
irreducible, and takes $1$ to $1$ if $X = \Spec k$. If all
components of $X$ have the same dimension $d$, then the natural pairing
$H^i(X) \times H^{2d-i}(X) \to K$ is perfect; \fancyitem{\rm
(W\arabic{enumi})} \label{Ax W3} The projections induce an
isomorphism $H^*(X) \otimes_K H^*(Y) \stackrel{\!\sim}\to H^*(X
\times Y)$; \fancyitem{\rm (W\arabic{enumi})} \label{Ax W4} There
are cycle class maps $\cl \colon \CH^i_\Q(X) \to H^{2i}(X)$. It is a
ring homomorphism functorial for pullback and pushforward, where
pushforward for $H$ is defined using (W2); \fancyitem{\rm
(W\arabic{enumi})} \label{Ax W5} The weak Lefschetz theorem holds;
\fancyitem{\rm (W\arabic{enumi})} \label{Ax W6} The hard Lefschetz
theorem holds; \fancyitem{\rm (W\arabic{enumi})} \label{Ax W7} $H$
preserves products, i.e. $H(X) \cong \prod_i H(X_i)$ if $X =
\coprod_i X_i$. Moreover, this isomorphism identifies $\tr_X$ with
$\sum_i \tr_{X_i}$.
\end{enumerate}
\end{Def}

\begin{Ex}\label{Ex etale}
For every prime $\ell$ invertible in $k$, the $\ell$-adic \'etale
cohomology gives a Weil cohomology theory \cite{SGA4III,WeilI,
WeilII}.
\end{Ex}

\begin{Ex}\label{Ex crystalline}
For a perfect field $k$ of positive characteristic $p$ with Witt
ring $W(k)$ with field of fractions $K$, crystalline cohomology
$H^i_{\text{cris}}(X/K)$ is a Weil cohomology theory \cite{BerCris},
\cite[Corollary~1(2)]{KM}, \cite{Gros}, \cite{GilMes}. See also
\cite{IllCrys} for an expository account and additional references.
\end{Ex}

Let $\sim$ be an adequate equivalence relation (cf.\ \cite{Sam})
finer than homological equivalence for every Weil cohomology theory,
e.g. $\sim$ is rational, algebraic, or smash-nilpotent equivalence.
For a variety $X$ and $i \in \Z_{\geq 0}$, we will write $\CH^i(X)$
for algebraic cycles of codimension $i$ modulo $\sim$, and
$\CH^i_\Q(X)$ for $\CH^i(X) \otimes \Q$.

We will omit further mention of $\sim$ unless it plays a role in the
arguments.

\begin{Def}\label{Def correspondences}
Let $X$ and $Y$ be smooth projective $k$-schemes, and assume that
$X$ has connected components $X_1, \ldots, X_m$ of dimensions $d_1,
\ldots, d_m$ respectively. Then the group of \emph{correspondences
of degree $r$ (modulo $\sim$)} from $X$ to $Y$ is
\[
\Corr^r(X,Y) := \bigoplus_{i=1}^m \CH^{d_i+r}_\Q(X_i \times Y).
\]
An element $\alpha \in \Corr^r(X,Y)$ is a \emph{correspondence} from
$X$ to $Y$, and is denoted
$\alpha \colon X \cto Y$.
%
There is a natural composition of correspondences:
\begin{align*}
\Corr^r(X,Y) \times \Corr^s(Y,Z) &\to \Corr^{r+s}(X,Z)\\
(\alpha, \beta) &\mapsto \beta \circ \alpha.
\end{align*}
If the degree $r$ in the superscript is omitted, it will be assumed
$0$.
\end{Def}

\begin{Def}\label{Def Chow motive}
The \emph{category of Chow motives (modulo $\sim$)} is the category
whose objects are triples $(X,p,m)$, where $X$ is a smooth
projective $k$-scheme, $p \in \Corr(X,X)$ a projector (i.e. $p^2 =
p$), and $m \in \Z$ an integer. Morphisms $(X,p,m) \cto (Y,q,n)$
from $(X,p,m)$ to $(Y,q,n)$ are given by
\begin{align*}
\Hom\big((X,p,m),(Y,q,n)\big) &= q\Corr^{n-m}(X,Y)p\\
&= \{\alpha \in \Corr^{n-m}(X,Y)\ |\ \alpha p = \alpha = q \alpha\}.
\end{align*}
We denote the category of Chow motives by $\M_k$. The motive $(\Spec
k,\id,-1)$ is called the \emph{Lefschetz motive}, and is denoted by
$\L$. We write $\L^n$ for $(\Spec k,\id,-n)$, which can also be
defined as $\L^{\otimes n}$ using the tensor product of Remark
\ref{Rmk tensor product}.
\end{Def}

\begin{Rmk}\label{Rmk motives contravariant}
There is a functor $\SmPr_k\op \to \M_k$ associating to every smooth
projective $k$-scheme $X$ the motive $(X,\id,0)$, and to every map
$f \colon X \to Y$ the (class of the) graph $\Gamma_f \in
\Corr(Y,X)$.
\end{Rmk}

\begin{Def}\label{Def pushforward on Weil cohomology}
If $H$ is a Weil cohomology theory with coefficient field $K$, then
Poincar\'e duality gives an isomorphism
\begin{align}
H^*(X) &\rA H^*(X)^\vee\label{Eq Poincare duality}\\
v &\longmapsto \left(w \mapsto \int v \smile w\right).\nonumber 
\end{align}
Together with the K\"unneth formula this gives isomorphisms
\[
H^*(X \times Y) \cong H^*(X) \tens H^*(Y) \cong H^*(X)^\vee \tens
H^*(Y) = \Hom(H^*(X),H^*(Y)).
\]
If $\alpha \in \Corr^r(X,Y)$, then under these isomorphisms
$\cl(\alpha)$ induces a pushforward
\begin{equation}
\alpha_* \colon H^i(X) \to H^{i+2r}(Y).\label{Eq pushforward on Weil
cohomology}
\end{equation}
In particular, a projector $p \in \Corr(X,X)$ induces a projector on
$H^*$, and we extend $H$ to a functor $\M_k \to \gVec_K$ by setting
\[
H^*(X,p,m) := pH^*(X)[2m],
\]
where for a graded vector space $V = \bigoplus_i V^i$, we set
$V^i[m] = V^{i+m}$.\pagebreak

Given a morphism $\alpha \colon (X,p,m) \cto (Y,q,n)$, we define
$H\alpha$ to be the graded map given by the pushforward $\alpha_*
\colon pH^{i+2m}(X) \to qH^{i+2n}(Y)$ as in \eqref{Eq pushforward on
Weil cohomology}.

Some Weil cohomology theories have further structure (e.g.\ a Hodge
structure or a Galois action), and these structures are typically
preserved by pushforward along cycles (if everything is given the
correct `Tate twist'). We will consider this additional structure
understood, and we will not use separate notation for the
corresponding enriched functor.
\end{Def}

\begin{Ex}
The cohomology $H^i(\L)$ of the Lefschetz motive is $0$ if $i \neq
2$ and one-dimensional if $i = 2$. It's often thought of as the
compactly supported cohomology of $\A^1$ (or the reduced cohomology
of $\P^1$), and is equipped with the corresponding Galois action or
Hodge structure.
\end{Ex}

\begin{Rmk}\label{Rmk tensor product}
The category $\M_k$ has a tensor product given by
\[
(X,p,m) \tens (Y,q,n) = (X \times Y, p \tens q, m+n).
\]
Thus, we also get symmetric and alternating products $S^n$ and
$\bigwedge^n$ by considering the projectors $\frac{1}{n!}
\sum_\sigma \sigma$ and $\frac{1}{n!} \sum_\sigma (-1)^\sigma
\sigma$ respectively on $X^n$.
\end{Rmk}

\begin{Rmk}\label{Rmk realisation functor}
If $H$ is a Weil cohomology theory with coefficient field $K$, then
the functor $H \colon \M_k \to \gVec_K$ is a tensor functor if we
equip $\gVec_K$ with the tensor product as in super vector spaces:
on objects, it is given by the usual graded tensor product, but the
swap is given by
\begin{align*}
\tau_{\substack{\\V,W}} \colon V \tens W &\to W \tens V\\
v_i \tens w_j &\mapsto (-1)^{ij} w_j \tens v_i,
\end{align*}
for homogeneous elements $v_i \in V^i$, $w_j \in W^j$.
To see that this makes $H$ into a tensor functor, note that the
K\"unneth isomorphism is given by the map
\begin{align*}
H^*(X) \tens H^*(Y) &\to H^*(X\times Y)\\
\alpha \tens \beta &\mapsto \pi_X^* \alpha \smile \pi_Y^* \beta,
\end{align*}
which under swapping $X$ and $Y$ picks up a factor $(-1)^{\deg
\alpha \deg \beta}$.
\end{Rmk}

\begin{Rmk}\label{Rmk super}
In particular, if $H^*(X) = H\even \oplus H\odd$, then
\[
H^*\Big(S^r X\Big) = S^r\big(H^*(X)\big) = \bigoplus_{i+j=r} S^i
H\even \tens {\textstyle\bigwedge\nolimits}^j H\odd,
\]
and conversely
\[
H^*\Big({\textstyle\bigwedge\nolimits}^r X\Big) =
{\textstyle\bigwedge\nolimits}^r\big(H^*(X)\big) = \bigoplus_{i+j=r}
{\textstyle\bigwedge\nolimits}^i H\even \tens S^j H\odd.
\]
Indeed, because of the sign in $\tau_{V,W}$, the symmetriser and
antisymmetriser get swapped in odd degree.
\end{Rmk}

\begin{Rmk}\label{Rmk biproducts}
The category $\M_k$ has binary biproducts; for example
\[
(X,p,n) \oplus (Y,q,n) = (X \amalg Y, p \oplus q, n).
\]
One can construct $(X,p,m) \oplus (Y,q,n)$ when $m \neq n$ as well,
by replacing $X$ by $X \times \P^{n-m}$ or $Y$ by $Y \times
\P^{m-n}$, using that $\L^d = (\pt,\id,-d)$ is a summand of
$(\P^d,\id,0)$ for $d \geq 0$ (see e.g.\ \cite[1.13]{Sch}), and
using the tensor product of Remark \ref{Rmk tensor product} to
reduce to the case $m = n$.

However, the category $\M_k$ is not in general abelian, and in fact
for our choices of $\sim$ this is either false or open; see for
example \cite[Corollary~3.5]{Sch} for the case where $\sim$ is rational
equivalence and $k$ is not isomorphic to $\bar \F_p$.
\end{Rmk}

\begin{Lemma}\label{Lem vanish}
Let $X$ and $Y$ be smooth projective schemes over a field $k$, and
let $\alpha \in \Corr(X,Y)$. Then the map $\alpha_* \colon H^*(X)
\to H^*(Y)$ is $0$ if and only if $\cl(\alpha) = 0 \in H^*(X \times
Y)$.
\end{Lemma}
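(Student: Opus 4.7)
The plan is to unwind the definition of $\alpha_*$ from \boldref{Def pushforward on Weil cohomology} and observe that the lemma is essentially immediate from the Künneth formula (W3) and Poincaré duality (W2).

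Concretely, the construction in \boldref{Def pushforward on Weil cohomology} factors the assignment $\alpha \mapsto \alpha_*$ as the composition
\[
\Corr^r(X,Y) \xrightarrow{\ \cl\ } H^*(X\times Y) \xrightarrow{\ \sim\ } H^*(X)^\vee \tens H^*(Y) = \Hom_K\bigl(H^*(X),H^*(Y)\bigr),
\]
where the second arrow is the composition of the K\"unneth isomorphism (W3) with Poincar\'e duality on the $X$-factor (W2). Since both isomorphisms are indeed isomorphisms, the second arrow is injective, so $\alpha_* = 0$ if and only if $\cl(\alpha) = 0$.

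If a more hands-on argument is desired, one writes $\cl(\alpha) = \sum_i a_i \tens b_i \in H^*(X)\tens H^*(Y)$ with the $b_i$ linearly independent. The explicit formula for pushforward gives $\alpha_*(v) = \sum_i (-1)^{\varepsilon_i}\tr_X(v \cup a_i)\, b_i$ for $v \in H^*(X)$, with signs coming from \boldref{Rmk realisation functor}. Linear independence of the $b_i$ then reduces $\alpha_* = 0$ to the vanishing of $\tr_X(v\cup a_i)$ for every $v$ and every $i$, and the perfect pairing in (W2) forces $a_i = 0$ for all $i$, hence $\cl(\alpha) = 0$.

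There is no real obstacle here: the content of the lemma is precisely the non-degeneracy of Poincar\'e duality combined with the K\"unneth decomposition; the only thing to be careful about is the sign bookkeeping when identifying $H^*(X\times Y)$ with $\Hom(H^*(X),H^*(Y))$, which is routine and already accounted for by the tensor-functoriality of $H$ recorded in \boldref{Rmk realisation functor}.
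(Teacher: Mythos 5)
Your proof is correct and takes exactly the paper's route: the paper's entire proof is the sentence \enquote{This is clear from the definition of $\alpha_*$}, and your argument simply makes explicit that, by \boldref{Def pushforward on Weil cohomology}, $\alpha_*$ is defined as the image of $\cl(\alpha)$ under the Künneth-plus-Poincaré-duality isomorphism $H^*(X\times Y)\cong\Hom(H^*(X),H^*(Y))$, whose injectivity gives the equivalence. The hands-on rewriting in your second paragraph is a correct elaboration of the same point and adds nothing essentially new.
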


\begin{proof}
This is clear from the definition of $\alpha_*$, cf.\ Definition
\ref{Def pushforward on Weil cohomology}.
\end{proof}

\begin{Lemma}\label{Lem trace}
Let $X$ be an $n$-dimensional smooth projective $k$-scheme, and let
$\alpha \in \Corr(X,X)$. Then
\[
\sum_{i = 0}^{2n} (-1)^i \tr\left(\alpha_*\big|_{H^i(X)}\right) =
\alpha \cdot [\Delta_X],
\]
where $\Delta_X \subseteq X \times X$ is the diagonal. In particular, it is a rational number that does not depend on $H$. 
\end{Lemma}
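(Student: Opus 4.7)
The plan is to interpret both sides through the K\"unneth formula (W3) and Poincar\'e duality (W2), and then recognise the identity as the standard fact that on a finite-dimensional super vector space the trace pairing corresponds to contraction against the diagonal.

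First I would use (W3) and (W2) to identify
$$H^*(X \times X) \cong H^*(X) \otimes_K H^*(X) \cong H^*(X)^\vee \otimes_K H^*(X) \cong \operatorname{End}_K\bigl(H^*(X)\bigr).$$
Under these identifications the class $[\Delta_X]$ corresponds to the identity endomorphism: expanding in a homogeneous basis $\{e_\mu\}$ of $H^*(X)$ with Poincar\'e-dual basis $\{e_\mu^\vee\}$, the K\"unneth formula writes $[\Delta_X]$ as $\sum_\mu \pm\, e_\mu \otimes e_\mu^\vee$, which is precisely the image of $\operatorname{id}_{H^*(X)}$ under $V^\vee \otimes V \cong V \otimes V$. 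By the very definition in \boldref{Def pushforward on Weil cohomology}, the class $\cl(\alpha)$ corresponds in turn to the graded map $\alpha_*$.

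The core linear-algebra fact I need is then the following: on a finite-dimensional graded $K$-vector space $V$ with a perfect pairing concentrated in top degree, the pairing on $V \otimes V$ given by cup product followed by the pairing on $V$, transported through $V \otimes V \cong \operatorname{End}(V)$ via the graded-symmetric swap, is the super-trace $\phi \mapsto \sum_i (-1)^i \tr(\phi|_{V^i})$. Combining this with the identifications above gives
$$\alpha \cdot [\Delta_X] \,=\, \int_{X \times X} \cl(\alpha) \cup [\Delta_X] \,=\, \sum_{i=0}^{2n} (-1)^i \tr\bigl(\alpha_*|_{H^i(X)}\bigr),$$
which is the claimed identity. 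The final assertion of the lemma is then automatic: the left-hand side is the degree of an intersection of $\Q$-cycles of complementary dimension on $X \times X$, hence a rational number making no reference to any cohomology theory.

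The main obstacle is the bookkeeping of Koszul signs. With the super-graded tensor product on $\gVec_K$ fixed in \boldref{Rmk realisation functor}, the K\"unneth expression of $[\Delta_X]$, the Poincar\'e duality isomorphism $V \cong V^\vee$ (with its degree shift), and the defining formula for $\alpha_*$ each contribute signs, and one has to check that they conspire to produce exactly the alternating sum rather than the ordinary trace. Once this is done, the identification of $[\Delta_X]$ with the identity forces the correct signed trace to appear, and the lemma follows.
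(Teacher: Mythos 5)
Your proposal is correct and is exactly the standard proof of the Lefschetz trace formula for Weil cohomology theories (cf.\ Kleiman, \emph{Algebraic cycles and the Weil conjectures}, \S 1.3), which is the ``well-known'' fact the paper itself invokes without proof. You identify the right chain of isomorphisms (K\"unneth, Poincar\'e duality, $H^*(X\times X)\cong\operatorname{End}(H^*(X))$ carrying $[\Delta_X]$ to the identity and $\cl(\alpha)$ to $\alpha_*$) and correctly flag the Koszul-sign bookkeeping as the only remaining computation; since the paper defers that computation to the literature, so may you.
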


\begin{proof}
The first statement is well-known, and the second follows.
\end{proof}

\begin{Cor}\label{Cor characteristic polynomial}
Let $X$ be a smooth projective $k$-variety such that $\Kun(X)$ holds
(see Definition \ref{Def conjectures}). Let $\alpha \in \Corr(X,X)$.
Then the characteristic polynomial of $\alpha$ on $H^i(X)$ is in
$\Q[t]$, and is independent of the Weil cohomology theory $H$.
\end{Cor}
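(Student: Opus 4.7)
The plan is to reconstruct the characteristic polynomial from its Newton power sums $p_n := \tr(\alpha_*^n|_{H^i(X)})$ and apply \boldref{Lem trace}. Newton's identities express each elementary symmetric polynomial $e_k(\lambda_1, \ldots, \lambda_N)$ as a polynomial with rational coefficients in the power sums $p_1, \ldots, p_k$. Since the coefficients of $\det(tI - \alpha_*|_{H^i(X)})$ are (up to sign) the elementary symmetric polynomials in its eigenvalues, it suffices to prove that each $p_n$ is a rational number that does not depend on $H$.

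To isolate the degree $i$ part of cohomology, use $\Kun(X)$ to choose an algebraic lift $\pi_i \in \Corr(X,X)$ of the $i$-th K\"unneth projector, whose pushforward acts on $H^*(X)$ as the projection onto the degree $i$ component. Since $\alpha$ is a degree $0$ correspondence, $\alpha_*$ preserves the grading, so the composition $\pi_i \circ \alpha^n \in \Corr(X,X)$ has pushforward that vanishes on $H^j(X)$ for $j \neq i$ and equals $\alpha_*^n|_{H^i(X)}$ on $H^i(X)$. Applying \boldref{Lem trace} to the correspondence $\pi_i \circ \alpha^n$ then gives
$$(-1)^i p_n \;=\; \sum_{j=0}^{2n} (-1)^j \tr\bigl((\pi_i \circ \alpha^n)_*\big|_{H^j(X)}\bigr) \;=\; (\pi_i \circ \alpha^n) \cdot [\Delta_X],$$
a rational intersection number by \boldref{Lem trace}. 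Newton's identities then express each coefficient of $\det(tI - \alpha_*|_{H^i(X)})$ as a rational combination of the $p_n$, yielding the rationality claim.

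The main subtle point is that the algebraic lift $\pi_i$ furnished by $\Kun(X)$ is a priori $H$-dependent, so a priori the rational number $(\pi_i \circ \alpha^n) \cdot [\Delta_X]$ might depend on $H$ through the choice of $\pi_i$. This is handled by interpreting $\Kun(X)$ as supplying a single algebraic cycle whose class is the $i$-th K\"unneth projector in every Weil cohomology theory under consideration; granting this, the identity above computes $p_n$ via a single intersection number on $X \times X$ that is manifestly independent of $H$. The same independence then propagates through Newton's identities to the full characteristic polynomial.
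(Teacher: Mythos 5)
Your approach is a genuinely different route from the paper's. The paper's proof uses the identity $c_j = (-1)^{n-j}\tr\bigl(\bigwedge^{n-j}A\bigr)$ for the coefficients of the characteristic polynomial and applies \boldref{Lem trace} to the motivic wedge powers $\bigwedge^{n-j}(p_i\circ\alpha)$, which are correspondences on the products $X^{n-j}$. Your Newton's-identities argument instead only ever uses the correspondences $\pi_i\circ\alpha^n$ on $X$ itself, which is cleaner in two respects: it avoids any implicit handling of the super-sign phenomenon of \boldref{Rmk super} (where the motivic $\bigwedge^r$ realises as an ordinary $\bigwedge^r$ or $S^r$ on cohomology depending on the parity of $i$), and it never needs to take motivic exterior powers of the (in general non-idempotent, cf.\ \boldref{Rmk Kunneth projectors}) lift $\pi_i$. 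Your reading of $\Kun(X)$ in the final paragraph is also exactly the paper's \boldref{Def conjectures}: the cycle is required to act as the $i$-th K\"unneth projector for both Weil cohomology theories $H$ and $\H$ simultaneously.

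There is a small but genuine gap. Newton's identities recover the elementary symmetric functions $e_1,e_2,\dots$ from the power sums $p_1,p_2,\dots$, but to write down $\det\bigl(tI-\alpha_*|_{H^i(X)}\bigr)=\sum_{k=0}^{N}(-1)^k e_k\, t^{N-k}$ one must also know the degree $N=\dim H^i(X)$, and the power sums alone do not determine $N$ (e.g.\ a $2\times 2$ identity matrix and a $3\times 3$ matrix with eigenvalues $1,1,0$ have identical power sums). You must therefore also show that $\dim H^i(X)$ is independent of $H$. This is the ``$n=0$'' case of your own identity: applying \boldref{Lem trace} to $\pi_i$ gives $(-1)^i\dim H^i(X)=\pi_i\cdot[\Delta_X]$, an intersection number independent of $H$. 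With that supplement the argument is complete.
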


\begin{proof}
One easily checks that the coefficients of the characteristic
polynomial
$$P_A(t) = \det(t \cdot I - A) = t^n + c_{n-1}t^{n-1} +\ldots + c_1 t + c_0$$
of an endomorphism $A$ on an $n$-dimensional
vector space $V$ are given by
\[
c_j = (-1)^{n-j}\tr\left({\textstyle\bigwedge}^{n-j}A\right).
\]
Hence, if $p_i \in \Corr(X,X)$ denotes the $i$-th K\"unneth
projector, then applying Lemma \ref{Lem trace} to
$\bigwedge^{n-j}(p_i \circ \alpha)$ gives the result.
\end{proof}

\section{Simplicial topoi and mapping cones}\label{Sec simplicial}
We will use the following (possibly non-standard) terminology:

\begin{Def}
Let $D$ be a small category, and let $\mathscr C$ be a category.
Then a \emph{$D$-object} in $\mathscr C$ is a functor $D\op \to
\mathscr C$. If $D$ is the category $\Delta_+$ of finite (totally)
ordered sets with monotone maps (resp.\ the subcategory $\Delta$ of
nonempty objects), then a $D$-object is an \emph{augmented
simplicial object} (resp.\ \emph{simplicial object}) of $\mathscr
C$.

If $X_\bullet$ is an (augmented) simplicial object, then $X_n$
denotes the value of $X_\bullet$ on the set $[n] = \{0,\ldots,n\}$.
Giving an augmented simplicial object $(X_n)_{n \geq -1}$ is
equivalent to giving a simplicial object $X_\bullet = (X_n)_{n \geq
0}$ together with a map $X_\bullet \to X_{-1}$ to the constant
simplicial object with value $X_{-1}$.
\end{Def}

\begin{Rmk}\label{Rmk simplicial topos}
If $\mathscr C$ is a $2$-category (e.g.~$\Topos$ or $\SHV$; see
\hyperref[Sec Notation]{Notation}), then the correct definition of a
$D$-object is a pseudofunctor $X_\bullet \colon D\op \to \mathscr
C$. By the `Grothendieck construction', this should correspond to
some sort of fibred object. For example, a $D$-topos corresponds to
a functor $F \colon X \to D$ that is a fibration and cofibration,
such that each fibre $X_i$ is a topos, and for any morphism $\phi
\colon i \to j$ in $D$ the pair $(f_*,f^*) = (\phi^*,\phi_*)$ is a
morphism of topoi $f \colon X_j \to X_i$. This is the definition of
$D$-topos in \cite[exp.~V$^{\text{bis}}$,~d\'ef.~1.2.1]{SGA4II}.

By
\cite[exp.~V$^{\text{bis}}$,~d\'ef.~1.2.8~and~prop.~1.2.12]{SGA4II},
this gives rise to a total topos $\mathbf \Gamma(X)$. Abelian
objects in $\mathbf \Gamma(X)$ correspond to enrichments of the
pseudofunctor $X_\bullet \colon D\op \to \Topos$ through $D\op \to
\SHV$, and similarly for complexes.
\end{Rmk}

\begin{Rmk}\label{Rmk change of index}
Let $f \colon D' \to D$ be a functor, and let $X \to D$ a $D$-topos.
Then the pullback $D' \times_D X \to D'$ is a $D'$-topos, and $f^*
\colon \mathbf \Gamma(X) \to \mathbf \Gamma(D' \times_D X)$ has left
and right adjoints $f_!$ and $f^*$ \cite[exp.~V$^{\text{bis}}$,\
prop.~1.2.9]{SGA4II}; in particular, $(f_*,f^*) \colon \mathbf
\Gamma(D' \times_D X) \to \mathbf \Gamma(X)$ is a morphism of topoi.

For an inclusion $e_d \colon d \to D$ of an object $d \in D$, we
write $(-)|_{X_d}$ instead of $e_d^*$. In this case
\cite[exp.~V$^{\text{bis}}$,~cor.~1.2.11~and~prop.~1.3.7]{SGA4II}
show that $e_{d,!}$ is given on abelian objects by
\[
e_{d,!}(\mathscr F)(d') = \bigoplus_{\alpha \colon d \to d'}
\alpha_*(\mathscr F).
\]
Note that $e_{d,!}$ is exact, since $\alpha_*$ is exact (since
$(f_*,f^*) = (\alpha^*,\alpha_*)$ is a morphism of topoi), direct sums are exact
\cite[exp.~II,~prop.~6.7]{SGA4I}, and
exactness in $\Ab(\mathbf \Gamma(X))$ is determined pointwise.
\end{Rmk}

\begin{Rmk}\label{Rmk simplicial notation}
If $D = \Delta$, then following the notation of simplicial
topological spaces we will write $\Ab(X_\bullet)$ (resp.\
$\Ch(X_\bullet)$) for $\Ab(\mathbf \Gamma(X))$ (resp.\ $\Ch(\mathbf
\Gamma(X))$) and write $H^i(X_\bullet, K)$ for $H^i(\mathbf
\Gamma(X),K)$. For a bounded below complex\footnote{Note that a
complex $K$ on $X_\bullet$ is bounded below iff the complexes on the
components $X_i$ are bounded below \emph{uniformly in $i$}.} $K$ on
$X_\bullet$, there is a spectral sequence
\[
E_1^{p,q} = H^q\big(X_p,K|_{X_p}\big) \Ra H^{p+q}(X_\bullet,K),
\]
whose $E_1$ page is the alternating face complex on the cosimplicial
abelian groups $H^q(X_*,K|_{X_*})$; see for example
\cite[exp.~V$^{\text{bis}}$,~cor.~2.3.7 and~2.3.9]{SGA4II}. For
computation, it's useful to have a refined version of this
statement:
\end{Rmk}

\begin{Lemma}\label{Lem componentwise acyclic}
Let $X_\bullet$ be a simplicial topos, and let $K$ be a bounded
below complex on $X_\bullet$ with $H^i(X_n, K^j|_{X_n}) = 0$
for all $j \in \mathbf Z$, $n \in \mathbf N$, and $i > 0$. Then the
complex
\[
\Tot\left(\Gamma\big(X_*, K^*\big|_{X_*} \big)\right)
\]
computes $R\Gamma(X_\bullet,K)$.
\end{Lemma}

\begin{proof}
Let $\Set \times \Delta$ be the constant simplicial topos, so
$\mathbf \Gamma(\Set \times \Delta) = [\Delta,\Set]$. Since $\Set$
is terminal in $\Topos$ \cite[exp.~IV,~\S4.3]{SGA4I}, the constant
simplicial topos is the terminal simplicial topos, so we get a
functor $\theta \colon X_\bullet \to \Set \times \Delta$.

Again since $\Set$ is terminal, the terminal morphism $\Gamma \colon
\mathbf \Gamma(X) \to \Set$ factors as
\begin{equation*}
\begin{tikzcd}
\mathbf \Gamma(X) \ar{r}{\mathbf \Gamma(\theta)} &
\left[\Delta,\Set\right] \ar{r}{\varepsilon} & \Set\punct{.}
\end{tikzcd}
\end{equation*}
On the level of derived categories of abelian objects, we get
\begin{equation*}
\begin{tikzcd}[column sep=2.8em]
D^+(X_\bullet) \ar{r}{R\mathbf \Gamma(\theta)_*} &
D^+\big(\left[\Delta,\Ab\right]\big) \ar{r}{R\varepsilon_*} &
D^+(\Ab)\punct{.}
\end{tikzcd}
\end{equation*}
By \cite[exp.~V$^{\text{bis}}$,~prop.~1.3.7~and~cor.~1.3.12]{SGA4II}
we have
\[
\Big(R^i\mathbf \Gamma(\theta)_* K^j\Big)_n =
R^i\theta_{n,*}\big(K^j\big|_{X_n}\big) =
H^i\left(X_n,\big(K^j\big|_{X_n}\big)\right) = 0
\]
for all $n \in \mathbf N$, all $j \in \mathbf Z$, and all $i > 0$.
Thus, all $K^j$ are acyclic for $R\mathbf \Gamma(\theta)_*$, so
\[
R\mathbf \Gamma(\theta)_* K = \mathbf \Gamma(\theta)_* K.
\]
The result follows since $R\varepsilon_*$ is computed by the
totalisation of the alternating face complex
\cite[exp.~V$^{\text{bis}}$,~cor.~2.3.6]{SGA4II}.
\end{proof}

\begin{Def}\label{Def simplicial cone}
Let $\mathscr C$ be a category with finite coproducts and a terminal
object $*$, and let $f_\bullet \colon Y_\bullet \to X_\bullet$ be a
morphism of simplicial objects in $\mathscr C$. Then the
\emph{mapping cone} $C_\bullet(f)$ of $f_\bullet$ is the simplicial
object in $\mathscr C$ constructed as the pushout of the diagram
\begin{equation*}
\begin{tikzcd}[column sep=.5em,row sep=.7em]
 & Y_\bullet\ar[end anchor=north east]{ld}\ar[end anchor=north west]{rd}{i_0} & & Y_\bullet\ar[end anchor=north east]{ld}[swap]{i_1}\ar[end anchor=north west]{rd}{f_\bullet} & \\
* & & Y_\bullet \times \Delta[1] & & X_\bullet\punct{,}
\end{tikzcd}
\end{equation*}
where $Y_\bullet \times \Delta[1]$ is the simplicial object of
$\mathscr C$ defined in \cite[Tag
\href{https://stacks.math.columbia.edu/tag/017C}{017C}]{Stacks}. The
mapping cone satisfies
\begin{equation}
C_n(f) = * \amalg \underbrace{Y_n \amalg \ldots \amalg Y_n}_n
\amalg\ X_n.\label{Eq mapping cone}
\end{equation}
A slightly different construction is given in \cite[6.3.1]{Hdg3},
but the two are homotopy equivalent as simplicial objects in
$\mathscr C$. Similarly, the \emph{mapping cylinder}
$\cyl_\bullet(f)$ is the pushout of
\begin{equation*}
\begin{tikzcd}[column sep=0em,row sep=1.7em]
Y_\bullet\ar{d}[swap]{i_1}\ar{r}{f_\bullet} & X_\bullet \\
Y_\bullet \times \Delta[1]\punct{.} &
\end{tikzcd}
\end{equation*}
It satisfies
\begin{equation}
\cyl_n(f) = \underbrace{Y_n \amalg \ldots \amalg Y_n}_{n+1} \amalg\
X_n,\label{Eq mapping cylinder}
\end{equation}
and the inclusion $X_\bullet \to \cyl_\bullet(f)$ and projection
$\cyl_\bullet(f) \stackrel{f_\bullet}\to X_\bullet \times \Delta[1]
\to X_\bullet$ are homotopy inverses as simplicial object in
$\mathscr C$.
\end{Def}

\begin{Ex}
Let $f_\bullet \colon Y_\bullet \to X_\bullet$ be a morphism of
simplicial topoi, let $K$ and $L$ be abelian objects
(resp.~complexes) on $X_\bullet$ and $Y_\bullet$ respectively, and
let $\phi \colon f_\bullet^* K \to L$ be a morphism. Then we may
view $(f_\bullet,\phi)$ as a morphism $(Y_\bullet,L) \to
(X_\bullet,K)$ of simplicial objects in $\SHV$ (resp.~$\COMP$).
Thus, there is an abelian object (resp.~complex) $C_\bullet(\phi)$
on $C_\bullet(f)$ such that $(C_\bullet(f),C_\bullet(\phi))$ is the
mapping cone in $\SHV$ (resp.~$\COMP$).
\end{Ex}

\begin{Rmk}\label{Rmk terminal site}
The terminal object in $\Topos$ is $\Set$ \cite[exp.~IV,~\S
4.3]{SGA4I}, whose abelian objects are just abelian groups.
Moreover, $2$-coproducts in $\Topos$ are given by the product of
categories \cite[exp.~IV, exercice~8.7(bc)]{SGA4I}. The terminal object
in $\SHV$ or $\COMP$ is the pair $(\Set,0)$.
Thus, in the description of \eqref{Eq mapping cone}, the complex
$C_n(\phi)$ on $C_n(f)$ equals $0$ on $* = \Set$, equals $L|_{Y_n}$
on each of the components $Y_n$, and equals $K|_{X_n}$ on the
component $X_n$.
\end{Rmk}

\begin{Lemma}\label{Lem distinguished triangle}
Let $X_\bullet$ and $Y_\bullet$ be simplicial topoi, and let
$f_\bullet \colon Y_\bullet \to X_\bullet$ be a morphism of
simplicial topoi. Let $K$ and $L$ be bounded below complexes on
$X_\bullet$ and $Y_\bullet$ respectively, and let $\phi \colon
f_\bullet^* K \to L$ be a morphism. Then there is a canonical
distinguished triangle
\begin{equation*}
R\Gamma\big(C_\bullet(f),C_\bullet(\phi)\big) \to
R\Gamma\big(X_\bullet,K\big) \to R\Gamma\big(Y_\bullet,L\big) \to
R\Gamma\big(C_\bullet(f),C_\bullet(\phi)\big)[1]
\end{equation*}
in $D^+(\Ab)$, functorial in the morphism $(f,\phi) \colon
(Y_\bullet,L) \to (X_\bullet,K)$ of simplicial objects in $\COMP$.
\end{Lemma}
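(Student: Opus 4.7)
My plan is to produce a short exact sequence of double complexes
$$0 \rA R\Gamma(C_\bullet(f_\bullet), C_\bullet(\phi)) \rA R\Gamma(\cyl(f_\bullet), \cyl(\phi)) \rA R\Gamma(X_\bullet, K) \rA 0$$
by exploiting the simplicial structure of the mapping cylinder, and then to replace the middle term by $R\Gamma(Y_\bullet, L)$ using the simplicial homotopy equivalence $\cyl(f_\bullet) \simeq Y_\bullet$ of \boldref{Def simplicial cone}.

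By \boldref{Rmk simplicial topos}, for any simplicial pair $(Z_\bullet, M)$ the complex $R\Gamma(Z_\bullet, M)$ is the totalization of a double complex with $n$-th column $R\Gamma(Z_n, M|_{Z_n})$ and horizontal (cosimplicial) differential given by the alternating sum of pullbacks along face maps. Combining (\ref{Eq mapping cylinder}), (\ref{Eq mapping cone}), and \boldref{Rmk terminal site} (so that cohomology on a coproduct of topoi is the direct sum of cohomologies) yields the column-wise decompositions
\begin{align*}
R\Gamma(\cyl(f_\bullet)_n, \cyl(\phi)_n) &= R\Gamma(Y_n, L|_{Y_n}) \oplus \bigoplus_{k=1}^{n+1} R\Gamma(X_n, K|_{X_n}), \\
R\Gamma(C_n(f_\bullet), C_n(\phi)) &= R\Gamma(Y_n, L|_{Y_n}) \oplus \bigoplus_{k=1}^{n} R\Gamma(X_n, K|_{X_n}),
\end{align*}
where summands are indexed by monotone maps $\alpha \colon [n] \to [1]$ via $k = |\alpha^{-1}(0)|$: the constant map $1$ (case $k = 0$) is glued to $Y_n$ via $f_n$, and in $C_n$ the constant map $0$ (case $k = n+1$) is collapsed to the basepoint.

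The key combinatorial observation is that for any monotone $\psi \colon [n-1] \to [n]$, the equation $\alpha \circ \psi = \mathrm{const}_0$ forces $\alpha = \mathrm{const}_0$ (since the preimage of $0$ under $\alpha$, an initial segment of $[n]$, must then contain the image of $\psi$, which omits at most one element). It follows that the ``$k = n+1$'' copies of $X_n$ in $\cyl(f_\bullet)_n$ assemble into a simplicial subobject of $\cyl(f_\bullet)$ canonically identified via $i_0$ with $X_\bullet$, with quotient $C_\bullet(f_\bullet)$. Tracking the splittings column-by-column produces the desired short exact sequence of double complexes, whose totalization gives a distinguished triangle in $D^+(\Ab)$.

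To conclude, the simplicial homotopy equivalence $\cyl(f_\bullet) \to Y_\bullet$ of \boldref{Def simplicial cone}, together with the compatible map on coefficients $\cyl(\phi) \to L$, induces a chain homotopy on double complexes and hence a quasi-isomorphism $R\Gamma(\cyl(f_\bullet), \cyl(\phi)) \simeq R\Gamma(Y_\bullet, L)$. Substituting into the triangle above yields the claim. The main obstacle is the combinatorial bookkeeping: one must verify that the ``constant $0$'' components form a simplicial subobject in $\SHV$ (or $\COMP$), not just in $\Topos$, and that the column-wise splitting is strictly compatible with the horizontal differentials. Once this is in place, the distinguished triangle follows by purely formal homological algebra.
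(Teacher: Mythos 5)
Your strategy matches the paper's --- a degreewise split exact sequence through the mapping cylinder, then the simplicial homotopy equivalence $\cyl(f_\bullet) \simeq Y_\bullet$ --- but your ``key combinatorial observation'' is false, and the stronger compatibility you try to extract from it is both false and unnecessary. For a counterexample take $n = 1$, $\psi = \delta_1 \colon [0] \to [1]$ (sending $0 \mapsto 0$) and $\alpha = \operatorname{id}_{[1]}$; then $\alpha \circ \psi$ is the constant map $0$ but $\alpha$ is not. More generally the last face map $\delta_n\colon[n-1]\to[n]$ sends the $\alpha$ with $\alpha^{-1}(0) = \{0,\dots,n-1\}$ to the constant map $0$ as well, so the face maps of $\cyl(f_\bullet)$ do \emph{not} preserve the decomposition into the constant-$0$ component and its complement. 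Consequently the column-wise splitting cannot be made compatible with the horizontal differentials; nor should you want it to be, since a splitting of chain complexes would kill the connecting maps of the resulting long exact sequence, defeating the point of the lemma in its application in \boldref{Sec Ranks and dimensions}.

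None of this is actually needed: a short exact sequence of complexes that is merely split in each degree already produces a distinguished triangle, with no compatibility of the splitting with the differentials required. The degreewise split exactness of $0 \to \Gamma(C_n(f_\bullet), C_n(\phi)) \to \Gamma(\cyl(f_\bullet)_n, \cyl(\phi)_n) \to \Gamma(X_n, K|_{X_n}) \to 0$ follows immediately from \boldref{Rmk terminal site}, and the three columns assemble into complexes with chain maps between them simply because $X_\bullet \stackrel{i_0}{\to} \cyl(f_\bullet) \to C_\bullet(f_\bullet)$ are morphisms of simplicial objects in $\COMP$, so the induced maps on sections commute with the face differentials. What your write-up does gloss over is a real point: $R\Gamma(Z_n, M|_{Z_n})$ is a derived-category object, and to assemble your ``double complex'' you must realise it and the maps between columns by honest complexes, chosen compatibly across $n$ and across all four simplicial topoi simultaneously. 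The paper handles this with a single functorial injective resolution $J_\beta$ from the Stacks project, whose dimensionwise construction makes it commute with forming mapping cones and cylinders --- and that commutation is precisely the nontrivial input you need to make the double-complex picture honest.
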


In the setting of simplicial topological spaces, this is
\cite[6.3.3]{Hdg3}.

\begin{proof}
View $(\phi \colon f_\bullet^* K \to L)$ as a bounded below complex
$\mathbf K$ on the $(\leftarrow)$-topos $\mathbf X = (\mathbf
\Gamma(Y) \to \mathbf \Gamma(X))$. Let $\mathbf K \to \mathbf I$ be
an injective resolution. This means that $\mathbf I = (\psi \colon
f_\bullet^* I \to J)$ for bounded below complexes of injectives $I$
and $J$ on $X_\bullet$ and $Y_\bullet$ respectively, together with a
commutative square
\begin{equation*}
\begin{tikzcd}
f_\bullet^*K \ar{r}{f_\bullet^*i}\ar{d}[swap]{\phi} & f_\bullet^*I \ar{d}{\psi}\\
L \ar{r}[swap]{j} & J\punct{.}
\end{tikzcd}
\end{equation*}
Since $e_{X,!}$ and $e_{Y,!}$ are exact by Remark \ref{Rmk change of
index}, we conclude that $I = e_X^* \mathbf I$ and $J = e_Y^*
\mathbf I$ are still injective
\cite[Tag~\href{https://stacks.math.columbia.edu/tag/015Z}{015Z}]{Stacks}.
Since $e_X^*$ and $e_Y^*$ are exact, $i \colon K \to I$ and $j
\colon L \to J$ are still resolutions. Similarly, $K|_{X_n} \to
I|_{X_n}$ and $L|_{Y_n} \to J|_{Y_n}$ are injective resolutions for
any $n \in \mathbf N$.  Finally, functoriality of the mapping cone
and mapping cylinder gives resolutions
\begin{align*}
C_\bullet(\phi) &\to C_\bullet(\psi) \\
\cyl_\bullet(\phi) &\to \cyl_\bullet(\psi),
\end{align*}
where all terms $C_n(\psi)$ and $\cyl_n(\psi)$ are injective (but we
do not know whether $C_\bullet(\psi)$ and $\cyl_\bullet(\psi)$
themselves are injective).

From the descriptions of \eqref{Eq mapping cone}, \eqref{Eq mapping
cylinder}, and Remark \ref{Rmk terminal site}, we see that the maps
$Y_\bullet \stackrel{i_0}\to \cyl_\bullet(f) \to C_\bullet(f)$
induce a termwise split exact sequence
\begin{equation}
0 \to \Tot\Big(\Gamma\big(C_*,C_\bullet^*(\psi)\big)\Big) \to
\Tot\Big(\Gamma\big(\cyl_*,\cyl_\bullet^*(\psi)\big)\Big) \to
\Tot\big(\Gamma(Y_*,J^*)\big) \to 0.\label{Dia termwise split}
\end{equation}
The homotopy equivalence $(\cyl_\bullet,\cyl_\bullet(\psi)) \simeq
(X_\bullet,I)$ induces a chain homotopy equivalence
\[
\Tot\Big(\Gamma\big(\cyl_*,\cyl_\bullet^*(\psi)\big)\Big) \simeq
\Tot\big(\Gamma(X_*,I^*)\big)
\]
\cite[Tags~\href{https://stacks.math.columbia.edu/tag/019M}{019M}~and~\href{https://stacks.math.columbia.edu/tag/019S}{019S}]{Stacks}.
By Lemma \ref{Lem componentwise acyclic}, the termwise split short
exact sequence \eqref{Dia termwise split} gives the desired
distinguished triangle. The obtained sequence does not depend on the
choice of $\mathbf I$ since any two injective resolutions are
(non-canonically) homotopy equivalent \cite[Tag
\href{https://stacks.math.columbia.edu/tag/05TG}{05TG}]{Stacks}.
%
For functoriality, let
\begin{equation*}
\begin{tikzcd}
(Y'_\bullet,L') \ar{d}\ar{r}{(f'_\bullet,\phi')} & (X'_\bullet,K') \ar{d} \\
(Y_\bullet,L) \ar{r}{(f_\bullet,\phi)} & (X_\bullet,K)
\end{tikzcd}
\end{equation*}
be a commutative diagram of simplicial objects in $\COMP$ where $K$,
$K'$, $L$, and $L'$ are bounded below.
%
We may view this as a bounded below complex $\mathbf K$ on the
$\Big(\begin{tikzpicture}[baseline=(a).base,inner sep=0pt]\node (a)
at (0,-.08){};\node[scale=0.7] at (0,0) {\begin{tikzcd}[row
sep=.3em, column sep=.3em,every arrow/.append
style={shorten=-.25em}]\cdot & \cdot \ar{l} \\ \cdot \ar{u} & \cdot
\ar{u}\ar{l}\end{tikzcd}};\end{tikzpicture}\Big)$-topos
\begin{equation*}
\begin{tikzcd}[column sep=1.3em,row sep=1.4em]
\mathbf \Gamma(Y') \ar{d}\ar{r} & \mathbf \Gamma(X') \ar{d} \\
\mathbf \Gamma(Y) \ar{r} & \mathbf \Gamma(X)\punct{.}
\end{tikzcd}
\end{equation*}
Proceeding as above gives a natural morphism from the sequence
\eqref{Dia termwise split} for $f_\bullet$ to the same sequence for
$f'_\bullet$ (which again does not depend on any choices).
\end{proof}

\section{All-or-nothing motives and additional axioms}\label{Sec all-or-nothing}

As in Section \ref{Sec motives} we will assume $k$ is an
algebraically closed field. We want to apply the previous section in
the case where $X_\bullet$ is a simplicial scheme, viewed as a
simplicial topos with the pro-\'etale topology \cite{BS} (resp.~the
crystalline topology \cite{BerCris}), and $K$ and $L$ are the
constant sheaves $\Q_\ell$ (resp.~the structure sheaf $\mathcal
O_{X/W(k)}$). The recipe of Definition \ref{Def simplicial cone} and
Remark \ref{Rmk terminal site} tells us to consider the sheaf on
$C_\bullet(f)$ given by $0$ on the components $*$ of $C_\bullet(f)$
and by $\Q_\ell$ (resp.~$\mathcal O_{-/W(k)}$) on all other
components.

Thus, we need to equip the mapping cone of a morphism of simplicial
schemes with a mild motivic structure, interpreting the sheaf $0$ on
$*$ as a variant of the zero motive. This motivates the following ad
hoc notion.

\begin{Def}
An \emph{all-or-nothing motive} $(X,p)$ is a smooth projective
$k$-scheme together with a locally constant function $p \colon X \to
\{0,1\}$. The set of morphisms $f \colon (X,p) \to (Y,q)$ of
all-or-nothing motives $(X,p)$, $(Y,q)$ whose underlying schemes $X$
and $Y$ are connected is given by
\[
\Mor\big((X,p),(Y,q)\big) = \left\{\begin{array}{ll}\Mor(Y,X), &
p=q=1, \\ \{0\}, & p=0, \\ \varnothing, & \text{else,} \end{array}
\right.
\]
with composition given by $f \circ 0 = 0$. (The contravariance is
consistent with Remark \ref{Rmk motives contravariant}.)

In general, if $X = \coprod X_i$ and $Y = \coprod Y_j$ with the
$X_i$ and $Y_j$ irreducible, we set
\[
\Mor\big((X,p),(Y,q)\big) = \prod_j \coprod_i
\Mor\left(\big(X_i,p\big|_{X_i}\big),\big(Y_j,
q\big|_{Y_j}\big)\right).
\]
The category of all-or-nothing motives is denoted $\M_k\aon$.
\end{Def}

\begin{Rmk}\label{Rmk all-or-nothing motives as motives}
If $X_i$ are the components of $X$, then we think of $(X,p)$ as the
pure motive $(X,p,0)$ by identifying $p$ with the projector in
$\Corr(X,X)$ given by $0 \in \CH^*(X_i \times X_i)$ if $p|_{X_i} =
0$ and by $\Delta_{X_i} \in \CH^*(X_i \times X_i)$ if $p|_{X_i} =
1$. This gives a factorisation $\SmPr_k\op \to \M_k\aon \to \M_k$ of
the functor $\SmPr_k\op \to \M_k$. If $H$ is a Weil cohomology
theory, then the extension $H \colon \M_k \to \gVec_K$ of Definition
\ref{Def pushforward on Weil cohomology} gives functors $H \colon
\M_k\aon \to \gVec_K$ as well.

Note that the all-or-nothing motives $(X,0)$ for $X$ irreducible are
all isomorphic: the maps $0 \colon (X,0) \to (Y,0)$ and vice versa
are mutual inverses. This gives an alternative construction for
$\M_k\aon$:
\end{Rmk}

\begin{Def}
Let $\mathscr C$ be a category. Define the category $\mathscr C
\amalg \{*\}$ whose objects are $\operatorname{ob} \mathscr C \amalg
\{*\}$, and morphisms are given by
\[
\Mor_{\mathscr C \amalg\{*\}}(X,Y) =
\left\{\begin{array}{ll}\Mor_{\mathscr C}(X,Y), & X,Y \in
\operatorname{ob} \mathscr C, \\ \{0\}, & Y = *, \\ \varnothing, &
\text{else,}\end{array}\right.
\]
where $0 \circ f = 0$ whenever this makes sense.
If $F \colon \mathscr C \to \mathscr D$ is a functor to a category
$\mathscr D$ with a terminal object $*$, then there is a unique
extension of $F$ to a functor $F \colon \mathscr C \amalg \{*\} \to
\mathscr D$ with $F(*) = *$.
\end{Def}

\begin{Def}
Let $\mathscr C$ be a category. Define the category
$\operatorname{Coprod}(\mathscr C)$ of \emph{formal finite
coproducts in $\mathscr C$} whose objects are diagrams $X \colon I
\to \mathscr C$ from a finite discrete category $I$, and morphisms
from $X \colon I \to \mathscr C$ to $Y \colon J \to \mathscr C$ are
given by
\[
\Mor(X,Y) = \prod_{i \in I} \coprod_{j \in J} \Mor(X_i,Y_j).
\]
If $F \colon \mathscr C \to \mathscr D$ is a functor to a category
$\mathscr D$ with finite coproducts, then there is a unique
extension (up to isomorphism) of $F$ to a functor $F \colon
\operatorname{Coprod}(\mathscr C) \to \mathscr D$ taking $X \colon I
\to \mathscr C$ to the coproduct $\coprod_i F(X_i)$.

If $X \colon I \to \mathscr C$ is an object of
$\operatorname{Coprod}(\mathscr C)$, then $X$ is the coproduct of
the one-object diagrams $X_i$, so we may write $X = \coprod_i X_i$.
An initial object of $\operatorname{Coprod}(\mathscr C)$ is given by
the empty diagram $X \colon \varnothing \to \mathscr C$, and if $*$
is a terminal object of $\mathscr C$, then it is also terminal in
$\operatorname{Coprod}(\mathscr C)$.
\end{Def}

\begin{Ex}\label{Ex Coprod}
We have an equivalence $\SmPr \cong
\operatorname{Coprod}(\SmPrVar_k)$, and $\M_k\aon$ can be defined as
$(\operatorname{Coprod}(\SmPrVar_k \amalg \{*\}))\op$. In
particular, any functor $F \colon \SmPrVar_k \to \mathscr D$ to a
category $\mathscr D$ with finite coproducts and a terminal object
$*$ extends uniquely to a functor $F \colon (\M_k\aon)\op \to
\mathscr D$ such that $F(X,0) = *$ for $X$ irreducible and $F(X,p) =
\coprod_i F(X_i, p|_{X_i})$ if the $X_i$ are the components of $X$.
This gives an alternative method to extend a Weil cohomology $H$ to
a functor $H \colon \M_k\aon \to \gVec_K$, cf.~Remark \ref{Rmk
all-or-nothing motives as motives}.
\end{Ex}

Now we are ready to state the additional axioms on our Weil
cohomology theory.

\begin{Ax}\label{Def additional axioms}
Consider the following axioms on a Weil cohomology theory $H$.
\begin{enumerate}
\fancyitem{\rm (A\arabic{enumi})} \label{Ax compactly supported
cohomology} The functor $H \colon \SmPr_k\op \to \gAlg_K$ extends to
a compactly supported cohomology functor $H\cs \colon \mathscr C\op
\to \gVec_K$ where $\mathscr C \subseteq \Sch_{\text{sep, f.t.}/k}$
is the subcategory of proper morphisms. For a closed immersion $Z
\hookrightarrow X$ in $\mathscr C$ with complementary open $U$, the
pullback maps $H^i\cs(X) \to H^i\cs(Z)$ fit into a long exact
sequence
\[
\ldots \to H^i\cs(U) \to H^i\cs(X) \to H^i\cs(Z) \to \ldots
\]
functorial for proper morphisms $f \colon Y \to X$ by pulling back
$Z$ and $U$. If $X$ is proper, we write $H^i(X)$ for $H^i\cs(X)$.
\fancyitem{\rm (A\arabic{enumi})}\label{Ax vanishing} For any $X \in
\mathscr C$, we have $H^i\cs(X) = 0$ for $i > 2 \dim X$. If $X$ is
smooth projective of dimension $n$ and $Z \subseteq X$ is a reduced
closed subscheme whose irreducible components $Z_1,\ldots,Z_r$ have
dimension $d$, then $H^{2d}(Z) \cong K^r$ and $H^{2d}(X) \to
H^{2d}(Z) \cong K^r$ is Poincar\'e dual to the map
\begin{align*}
K^r &\to H^{2(n-d)}(X)\\
(\lambda_1,\ldots,\lambda_r) &\mapsto \sum_{i=1}^r \lambda_i
\cl(Z_i).
\end{align*}
\fancyitem{\rm (A\arabic{enumi})}\label{Ax cohomology of simplicial
motive} For a cosimplicial all-or-nothing motive
$(X_\bullet,\pi_\bullet)$, there is a graded $K$-vector space
$H^*(X_\bullet,\pi_\bullet)$ that is computed by a spectral sequence
\[
E_1^{p,q} = H^q(X_p,\pi_p) \Ra H^{p+q}(X_\bullet,\pi_\bullet)
\]
that is functorial in $(X_\bullet,\pi_\bullet)$, where $E_1^{*,q}$
is the alternating face complex on the cosimplicial $K$-vector space
$H^q(X_\bullet,\pi_\bullet)$. If all $\pi_p$ are equal to the
constant function $1$, then we write $H^*(X_\bullet)$ for
$H^*(X_\bullet,\pi_\bullet)$. \fancyitem{\rm
(A\arabic{enumi})}\label{Ax cohomological descent} (Cohomological
descent for proper hypercoverings) If $X_\bullet \to X$ is a proper
hypercovering of a proper $k$-scheme $X$ such that all $X_n$ are
smooth projective $k$-schemes, then the pullback map
\[
H^*(X) \to H^*(X_\bullet)
\]
is an isomorphism. \fancyitem{\rm (A\arabic{enumi})}\label{Ax
mapping cone} If $f_\bullet \colon (X_\bullet,\alpha_\bullet) \to
(Y_\bullet,\beta_\bullet)$ is a morphism of cosimplicial
all-or-nothing motives, then there is a long exact sequence
\[
\ldots \to H^i(C_\bullet(f)) \to H^i(X_\bullet,\beta_\bullet)
\stackrel{f_{\bullet,*}}\longrightarrow
H^i(Y_\bullet,\alpha_\bullet) \to \ldots
\]
that is functorial in the morphism $f_\bullet$, where $C_\bullet(f)$
is the mapping cone of cosimplicial all-or-nothing motives,
cf.~Definition \ref{Def simplicial cone}. \fancyitem{\rm
(A\arabic{enumi})}\label{Ax comparison of long exact sequence} If $i
\colon Z \hookrightarrow X$ is a closed immersion of proper
$k$-schemes with complement~$U$, if $X_\bullet \to X$ and $Z_\bullet
\to Z$ are proper hypercoverings such that all $X_n$ and $Z_n$ are
smooth projective $k$-schemes, and if $i_\bullet \colon Z_\bullet
\to X_\bullet$ is a morphism of simplicial schemes fitting in a
commutative diagram
\begin{equation*}
\begin{tikzcd}
Z_\bullet \ar{r}{i_\bullet}\ar{d} & X_\bullet \ar{d} & \\
Z \ar[closed]{r}{i} & X & U \ar[open']{l}\punct{,}
\end{tikzcd}
\end{equation*}
then the isomorphisms $H^*(X) \stackrel\sim\to H^*(X_\bullet)$ and
$H^*(Z) \stackrel\sim\to H^*(Z_\bullet)$ of \ref{Ax cohomological
descent} and the long exact sequences of \ref{Ax compactly supported
cohomology} and \ref{Ax mapping cone} give a commutative diagram
\begin{equation}\label{Dia comparison}
\begin{tikzcd}
\ldots \ar{r} & H^i\cs(U) \ar{r}\ar{d}[rotate=90,anchor=north,xshift=.1em]{\sim} & H^i(X) \ar{r}\ar{d}[rotate=90,anchor=north,xshift=.1em]{\sim} & H^i(Z) \ar{r}\ar{d}[rotate=90,anchor=north,xshift=.1em]{\sim} & \ldots\\
\ldots \ar{r} & H^i(C_\bullet(i)) \ar{r} & H^i(X_\bullet) \ar{r} &
H^i(Z_\bullet) \ar{r} & \ldots\punct{,}
\end{tikzcd}
\end{equation}
functorial for commutative diagrams
\begin{equation}\label{Dia square of hypercoverings}
\begin{tikzcd}[row sep=.6em, column sep=.3em,crossing over clearance=1ex]
 & V_\bullet \ar{rrr}\ar{dd} & \makebox*{.}{} & & X_\bullet \ar{dd} & \makebox*{}{} & & \\
W_\bullet \ar[start anchor=center, end anchor=center, shorten=1em]{ru}\ar[rrr,crossing over]\ar{dd} & & & Y_\bullet \ar[start anchor=center, end anchor=center, shorten=1em]{ru} & & & & \\
 & V \ar[closed]{rrr} & & & X & & & \makebox*{$X$}[l]{$X \setminus V$}\ar[open']{lll} \\
W \ar[start anchor=center, end anchor=center,
shorten=1em]{ru}\ar[closed]{rrr} & & & Y \ar[start anchor=center,
end anchor=center, shorten=1em]{ru}\ar[from=uu, crossing over] & & &
\makebox*{$Y$}[l]{$Y \setminus W$} \ar[open']{lll}\ar[start
anchor=center, end anchor=center, shorten=1em,xshift=.4em]{ru} &
\end{tikzcd}
\end{equation}
where the bottom squares (but not necessarily the top square) are
pullbacks and all vertical maps are proper hypercoverings by smooth
projective $k$-schemes.
\end{enumerate}
\end{Ax}

\begin{Ex}\label{Ex etale additional}
If $\ell$ is a prime invertible in $k$, then $\ell$-adic \'etale
cohomology is a Weil cohomology theory by Example \ref{Ex etale}. It
satisfies additional axiom \ref{Ax compactly supported cohomology}
by \cite[exp.~XVII,~5.1.16]{SGA4III}. The vanishing statement in
\ref{Ax vanishing} is given by
\cite[exp.~XVII,~cor.~5.2.8.1]{SGA4III}. For the computation of
$H^{2d}(Z)$ we may therefore pass to an open and assume $Z$ is
smooth, where the result follows from Poincar\'e duality
\cite[exp.~XVIII,~th.~3.2.5]{SGA4III}. The final statement of
\ref{Ax vanishing} follows from the definition of the cycle class
map \cite[chap.~4,~d\'ef.~2.3.2]{SGA4.5}.

By a (pseudofunctor version of) the universal property of Example
\ref{Ex Coprod}, the pseudofunctor
\begin{align*}
(-,\Q_\ell) \colon \SmPrVar &\to \mathscr \SHV\\
X &\mapsto (X\proet, \Q_\ell)
\end{align*}
extends uniquely to a pseudofunctor $F \colon (\M_k\aon)\op \to
\mathscr \SHV$ that maps $*$ to the pair $(\Set,0)$ and preserves
finite coproducts (see Remark \ref{Rmk terminal site}). This
corresponds to a fibred and cofibred category $E \to \M_k\aon$ whose
fibre $(X,p)\proet$ over a connected all-or-nothing motive $(X,p)$
is $X\proet$ if $p = 1$ and $\Set$ if $p = 0$, along with a sheaf
$\mathscr F$ on $\mathbf \Gamma(E)$ whose restriction to $(X,p)$ is
$\Q_\ell$ if $p = 1$ and $0$ if $p = 0$.

Given a cosimplicial all-or-nothing motive $(X_\bullet,\pi_\bullet)
\colon \Delta \to \M_k\aon$, the fibre product $E \times_{\M_k\aon}
\Delta \to \Delta$ is a simplicial topos
\cite[exp.~V$^{\text{bis}}$,~1.2.5]{SGA4II}, which we will denote by
$(X_\bullet,\pi_\bullet)\proet$.

The sheaf $\mathscr F$ on $\mathbf \Gamma(E)$ pulls back to $\mathbf
\Gamma((X_\bullet,\pi_\bullet)\proet)$, and the spectral sequence of
Remark \ref{Rmk simplicial notation} then reads
\begin{equation*}
E_1^{p,q} = H^q\Big((X_p,\pi_p)\proet,\mathscr F\Big) \Ra
H^{p+q}\big((X_\bullet,\pi_\bullet),\mathscr F\big).
\end{equation*}
For any all-or-nothing motive $(X,\pi)$, we have
$H^q((X,\pi)\proet,\mathscr F) = H^q(X,\pi)$, where the right hand
side is defined by Remark \ref{Rmk all-or-nothing motives as
motives}. This gives the required spectral sequence of \ref{Ax
cohomology of simplicial motive}. Moreover, \ref{Ax mapping cone}
holds by Lemma \ref{Lem distinguished triangle}, since all
(pseudo)functors involved preserve terminal objects and finite
coproducts, hence preserve the construction of the mapping cone.
Cohomological descent \ref{Ax cohomological descent} follows from
\cite[exp.~V$^{\text{bis}}$, prop.~4.3.2, th.~3.3.3,
and~prop.~2.5.7]{SGA4II}.

Finally, in the situation of \ref{Ax comparison of long exact
sequence}, the functoriality statement in Lemma \ref{Lem
distinguished triangle} immediately reduces us to the case of the
trivial hypercoverings $X_\bullet \to X$ and $Z_\bullet \to Z$ given
by the constant simplicial schemes $X_n = X$ and $Z_n = Z$. (This
does not preserve the hypothesis that each $X_n$ and $Z_n$ is smooth
projective, but \'etale cohomology is defined and has all the
desired properties also for simplicial objects in $\Sch_{\text{sep,
f.t.}/k}$.)

In fact, since $i_*$ is exact, the same goes for $\mathbf
\Gamma(i)_* \colon \Ab(Z_\bullet) \to \Ab(X_\bullet)$, so we may
replace the morphism $(Z_\bullet,\Q_{\ell,\bullet}) \to
(X_\bullet,\Q_{\ell,\bullet})$ by $(X_\bullet,i_*\Q_{\ell,\bullet})
\to (X_\bullet,\Q_{\ell,\bullet})$ using the functoriality assertion
of Lemma \ref{Lem distinguished triangle}. An injective resolution
\begin{equation*}
\begin{tikzcd}[row sep=1.5em,column sep=1.5em]
\Q_\ell \ar{r}{\alpha}\ar{d} & i_*\Q_\ell \ar{d} \\
I^* \ar{r}{\beta} & J^*
\end{tikzcd}
\end{equation*}
on $X\proet$ gives a constant (degreewise injective) resolution on
$(X_\bullet)\proet$
\begin{equation}\label{Dia injective resolution}
\begin{tikzcd}[row sep=1.5em,column sep=1.5em]
\Q_{\ell,\bullet} \ar{r}{\alpha_\bullet}\ar{d} & i_*\Q_{\ell,\bullet} \ar{d} \\
I^*_\bullet \ar{r}{\beta_\bullet} & J^*_\bullet\punct{.}
\end{tikzcd}
\end{equation}
This gives a commutative diagram with exact rows
\begin{equation}\label{Dia cone injective resolution}
\begin{tikzcd}[row sep=1.5em,column sep=1.5em]
0 \ar{r} & \makebox*{$\Tot\big(C^*(\beta_\bullet)\big)$}{$j_!\Q_\ell$} \ar{d}\ar{r} & \makebox*{$\Tot\big(\!\cyl^*(\beta_\bullet)\big)$}{$\Q_\ell$} \ar{d}\ar{r} & \makebox*{$\Tot\big(i_*\Q_{\ell,\bullet}\big)$}{$i_*\Q_\ell$} \ar{d}\ar{r} & 0\\
0 \ar{r} & \Tot\big(C^*(\alpha_\bullet)\big) \ar{d}\ar{r} & \Tot\big(\!\cyl^*(\alpha_\bullet)\big) \ar{d}\ar{r} & \Tot\big(i_*\Q_{\ell,\bullet}\big) \ar{d}\ar{r} & 0\\
0 \ar{r} & \Tot\big(C^*(\beta_\bullet)\big) \ar{r} &
\Tot\big(\!\cyl^*(\beta_\bullet)\big) \ar{r} &
\makebox*{$\Tot\big(i_*\Q_{\ell,\bullet}\big)$}{$\Tot\big(J^*_\bullet\big)$}
\ar{r} & 0
\end{tikzcd}
\end{equation}
of complexes on $X\proet$, whose vertical maps are
quasi-isomorphisms, where $C^*$ and $\cyl^*$ denote the dual
constructions to Definition \ref{Def simplicial cone}, applied to
the rows of \eqref{Dia injective resolution} viewed as morphisms of
cosimplicial objects in $\Ch(X\proet)$. The terms in the bottom row
of \eqref{Dia cone injective resolution} are injective, so the long
exact sequence of \ref{Ax compactly supported cohomology} is
computed by
\[
0 \to \Tot\Big(\Gamma\big(X,C^*(\beta_\bullet)\big)\Big) \to
\Tot\Big(\Gamma\big(X,\cyl^*(\beta_\bullet)\big)\Big) \to
\Tot\Big(\Gamma\big(X,J^*_\bullet\big)\Big) \to 0.
\]
This agrees with the sequence \eqref{Dia termwise split} for the
constant resolution \eqref{Dia injective resolution}, giving the
commutative diagram \eqref{Dia comparison}.
%
This construction is functorial for commutative diagrams \eqref{Dia
square of hypercoverings} since pullback along $Y \to X$ preserves
the short exact sequence $0 \to j_!\Q_\ell \to \Q_\ell \to
i_*\Q_\ell \to 0$ and by the functoriality statement in Lemma
\ref{Lem distinguished triangle}.
\end{Ex}

\begin{Ex}\label{Ex crystalline additional}
If $k$ is a perfect field of positive characteristic $p$ with Witt
ring $W(k)$ with field of fractions $K$, then crystalline cohomology
$H^i_{\text{cris}}(X/K)$ is a Weil cohomology theory by Example
\ref{Ex crystalline}. Axiom \ref{Ax compactly supported cohomology}
is provided by rigid cohomology with compact support
\cite[3.1(iii)]{BerRig}, and \ref{Ax cohomological descent} is
\cite[Theorem~4.5.1]{Tsu}. The proofs of axioms \ref{Ax vanishing}
(using Poincar\'e duality for rigid cohomology of smooth varieties
\cite{BerDual}), \ref{Ax cohomology of simplicial motive}, \ref{Ax
mapping cone}, and \ref{Ax comparison of long exact sequence} (using
Le Stum's site theoretic definition of rigid cohomology
\cite{LeStum}) are analogous to Example \ref{Ex etale additional}.
\end{Ex}

\begin{Rmk}\label{Rmk Cisinski-Deglise}
The above axioms are exactly what we need, but possibly not the most
natural choice. It's likely that our axioms (or an alternative set
of sufficient axioms) can be deduced from Cisinski--D\'eglise's
axioms for a \emph{mixed Weil cohomology theory} \cite{CisDeg1}.
Somewhat surprisingly, cohomological descent for proper
hypercoverings \ref{Ax cohomological descent} is indeed always
satisfied \cite[Corollary~17.2.6]{CisDeg2}.
\end{Rmk}

\section{Independence of Weil cohomology theory}\label{Sec independence}
As in Section \ref{Sec motives}, we assume that $k$ is an
algebraically closed field, and we fix an adequate equivalence
relation $\sim$ that is finer than homological equivalence for any
Weil cohomology theory.
%
From now on, we will fix Weil cohomology theories $H$ and $\H$ (see
Definition \ref{Def Weil cohomology theory}) with coefficient fields
$K$ and $\K$ respectively. We will always assume that $H$ and $\H$
satisfy the additional properties \hyperref[Ax compactly supported
cohomology]{(A1--6)} of Axiom \ref{Def additional axioms}.

\begin{Def}\label{Def conjectures}
Let $X$ and $Y$ be separated $k$-schemes of finite type. Consider
the following statements on independence of Weil cohomology theory:
{\setitemize{leftmargin=*, align=parleft, labelsep=2cm}
\begin{itemize}
\item[$\Dim\cs(X)$:] for each $i$, the dimensions of $H^i\cs(X)$ and $\H^i\cs(X)$ agree.
\item[$\Rk\cs(X,Y)$:] for any proper morphism $f \colon Y \to X$ and any $i$, the ranks of $f^*$ on $H^i\cs$ and $\H^i\cs$ agree.
\end{itemize}

\smallskip\noindent
If $X$ and $Y$ are smooth projective, we further consider:
\smallskip

\begin{itemize}
\item[$\Cl(X)$:] the kernels of the cycle class maps $\cl \colon \CH^*_\Q(X) \to H^*(X)$ and $\cl \colon \CH^*_\Q(X) \to \H^*(X)$ agree.
\item[$\Rk^r(X,Y)$:] for any $\alpha \in \Corr^r(X,Y)$ and any $i$, the ranks of $\alpha_*$ on $H^i$ and $\H^i$ agree.
\item[$\Rk(X,Y)$:] $\Rk^0(X,Y)$ holds.
\item[$\Rk^*(X,Y)$:] $\Rk^r(X,Y)$ holds for all $r \in \Z$.
\item[$\Kun(X)$:] for each $i$, there exists a cycle $p \in \Corr(X,X)$ inducing the $i^{\text{th}}$ K\"unneth projector on both $H^*$ and $\H^*$.
\end{itemize}}

\smallskip
\noindent The reliance of these properties on the chosen Weil
cohomology theories $H$ and $\H$ will be implicit, and we will make
no further mention of it.
\end{Def}

\begin{Rmk}\label{Rmk Rk motives}
For Chow motives $M = (X,p,m)$, $N = (Y,q,n)$ one can also define similar statements $\Dim(M)$ and $\Rk(M,N)$. But these are already
implied by $\Rk(X,X)$ and $\Rk^*(X,Y)$ respectively: the dimension
of $H^i(M)$ is the rank of $p_* \colon H^i(X) \to H^i(X)$, and the
rank of $\alpha_* \colon H^i(M) \to H^i(N)$ is the rank of $(q\alpha
p)_* \colon H^{i+2m}(X) \to H^{i+2n}(Y)$.
\end{Rmk}

\begin{Rmk}\label{Rmk Kunneth projectors}
For $\Kun(X)$, note that such a cycle $p$ need not be a projector in
$\Corr(X,X)$. We only know that the cycle class map sends it to a
projector in both $\End(H^*(X))$ and $\End(\H^*(X))$.
\end{Rmk}

\begin{Rmk}\label{Rmk finite field}
If $k = \bar \F_p$ and $X$ is smooth proper, then we know
$\Dim\cs(X)$ (for \emph{any} Weil cohomology theories $H$, $\H$),
because the dimension can be read off from the zeta function. On the
other hand, $\Rk\cs(X,Y)$ is still unknown even when $X$ and $Y$ are
smooth and projective.

We also know $\Kun(X)$ for $X$ smooth projective over $\bar \F_p$,
by \cite{KM}. Hence by Corollary \ref{Cor characteristic
polynomial}, the characteristic polynomial of $\alpha \in
\Corr(X,X)$ is independent of the Weil cohomology theory. In
particular, if $\alpha = p$ is a projector, this implies that $\dim
H^i(X,p,0)$ is independent of $H$.
\end{Rmk}

\begin{Rmk}
If $\operatorname{char}k = 0$, then for all known cohomology
theories $H$ and $\H$, the statements $\Cl(X)$ and $\Rk^*(X,Y)$ for
$X$ and $Y$ smooth projective, as well as $\Dim\cs(X)$ and
$\Rk\cs(X,Y)$ for $X$ and $Y$ separated and of finite type over $k$
are known. On the other hand, $\Kun(X)$ is still open, even for
(smooth projective) varieties over $\C$.
\end{Rmk}


\begin{Thm}\label{Thm equivalent}
Let $k = \bar\F_p$. Then the following are equivalent:
\begin{enumerate}
\fancyitem{(\arabic{enumi})}\label{Item 1} For all smooth projective
$k$-schemes $X$, we have $\Cl(X)$.
\initiate{enumi} \fancyitem{(2\alph{enumi})}\label{Item 2a} For all
smooth projective $k$-schemes $X$ and $Y$, we have $\Rk^*(X,Y)$;
\fancyitem{(2\alph{enumi})}\label{Item 2b} For all smooth projective
$k$-schemes $X$ and $Y$, we have $\Rk(X,Y)$;
\fancyitem{(2\alph{enumi})}\label{Item 2c} For all smooth projective
$k$-schemes $X$, we have $\Rk(X,X)$;
\initiate{enumi} \fancyitem{(3\alph{enumi})}\label{Item 3a} For all
separated finite type $k$-schemes $X$ and $Y$, we have $\Dim\cs(X)$
and $\Rk\cs(X,Y)$; \fancyitem{(3\alph{enumi})}\label{Item 3b} For
all smooth, quasi-projective $k$-schemes $X$, we have $\Dim\cs(X)$.
\end{enumerate}
\end{Thm}

The outline of the rest of the article is as follows. In each of the
following sections, we will prove one of the implications, often in
a more refined version. We will prove the implications in the
following cyclic order:
\begin{equation*}
\begin{tikzcd}
\ref{Item 1} \ar[Leftrightarrow]{r} & \ref{Item 2a} \ar[Rightarrow]{r} & \ref{Item 2b} \ar[Rightarrow]{r}\ar[Leftrightarrow]{d} & \ref{Item 3a} \ar[Rightarrow]{r} & \ref{Item 3b} \ar[Rightarrow]{r} & \ref{Item 1}\punct{.} \\
& & \ref{Item 2c} & & &
\end{tikzcd}
\end{equation*}
Implications $\ref{Item 2a} \Ra \ref{Item 2b} \Ra \ref{Item 2c}$ and
$\ref{Item 3a} \Ra \ref{Item 3b}$ are trivial. For $\ref{Item 2c}
\Ra \ref{Item 2b}$, recall that
\[
\Corr(X,Y) = \Hom_{\M_k}\big((X,\id,0),(Y,\id,0)\big),
\]
so $\Rk(X \amalg Y, X \amalg Y)$ implies $\Rk(X, Y)$ since $X \amalg
Y$ is the biproduct in $\M_k$ (see Remark \ref{Rmk biproducts}), the
functors $H \colon \M_k \to \gVec_K$ and $\H \colon \M_k \to
\gVec_\K$ preserve biproducts, and the rank of a block matrix
$\left(\begin{smallmatrix}0 & 0 \\ A & 0\end{smallmatrix}\right)$ is
the rank of $A$.

The implications $\ref{Item 1} \LRa \ref{Item 2a}$, $\ref{Item 2b}
\Ra \ref{Item 3a}$, and $\ref{Item 3b} \Ra \ref{Item 1}$ will be the
contents of the following three sections (Section \ref{Sec Cycle
classes and ranks}, Section \ref{Sec Ranks and dimensions}, Section
\ref{Sec Dimensions and cycle class maps}) respectively.

\begin{Rmk}
The proof of $\ref{Item 1} \Ra \ref{Item 2a}$ relies on $\Kun(X)$,
which currently is known only when $k = \bar \F_p$. Implication
$\ref{Item 2b} \Ra \ref{Item 3a}$ uses the Weil conjectures and a
hypercovering argument, so also does not generalise to other fields
(but see Remark \ref{Rmk other fields}). The implication $\ref{Item
3b} \Ra \ref{Item 1}$ holds over an arbitrary algebraically closed
field.
\end{Rmk}

\begin{Rmk}\label{Rmk other fields}
Suppose $H$ and $\H$ are given by \'etale cohomology. If $S$ is an
irreducible scheme, $\bar s$ and $\bar \eta$ are a geometric point
and a geometric generic point, $\ell$ is a prime invertible on $S$,
and $X \to S$ is a smooth proper morphism of schemes, then the
smooth and proper base change theorems give isomorphisms
\begin{equation}\label{Eq specialisation}
\operatorname{sp} \colon H^i\big(X_{\bar s},\Q_\ell\big) \cong
H^i\big(X_{\bar \eta},\Q_\ell\big);
\end{equation}
see for example \cite[exp.~XVI,~cor.~2.2]{SGA4III}. Using a standard
spreading out argument, this shows that statements \ref{Item 1} and
\hyperref[Item 2a]{(2abc)} for $\bar \F_p$ for one prime $p$ (resp.\
every prime $p$) imply the same result for any algebraically closed
field $k$ of characteristic $p$ (resp.\ any algebraically closed
field $k$).

Moreover, for \'etale cohomology the argument in $\ref{Item 2b} \Ra
\ref{Item 3a}$ can be refined to deduce \ref{Item 3a} over an
arbitrary algebraically closed field $k$ from \ref{Item 2b} over
$\bar \F_p$, again using spreading out and the specialisation
isomorphism \eqref{Eq specialisation}. The argument for $\ref{Item
3b} \Ra \ref{Item 1}$ works over any algebraically closed field, so
we see that the case $k = \bar \F_p$ is the essential one.
\end{Rmk}

\begin{Rmk}
If $X$ is smooth projective over $k$ and $\sim$ is rational
equivalence, we get a ring isomorphism
\begin{equation}
\operatorname{ch} \colon K_\Q(X) \to \CH^*_\Q(X).\label{Eq GRR}
\end{equation}
Thus, $\Cl(X)$ is equivalent to the following statement:

\smallskip
{\setitemize{leftmargin=*, align=parleft, labelsep=2cm}
\begin{itemize}
\item[$\Cl'(X)$:] the kernels of the Chern character maps $\operatorname{ch} \colon K_\Q(X) \to H^*(X)$ and $\operatorname{ch} \colon K_\Q(X) \to \H^*(X)$ agree.
\end{itemize}}

\smallskip
\noindent To study the vanishing of $\operatorname{ch}_H(\alpha)$
for $\alpha \in K_\Q(X)$, the splitting principle plus injectivity
of pullbacks for dominant maps \cite[Proposition~1.2.4]{KleDix} reduces us
to the case where $\alpha$ is in the subring of $K_\Q(X)$ generated
by classes of the form $[\mathscr L]$ for $\mathscr L$ a line bundle
on $X$. Under the isomorphism \eqref{Eq GRR}, this corresponds to
the subalgebra of $\CH^*_\Q(X)$ generated by divisors (but note that
$\operatorname{ch}([D]) \neq [\mathcal O(\pm D)]$).

Although $\Cl(X)$ is known for divisors, it seems that this cannot
be used to deduce the statement in general.
\end{Rmk}

\section{Cycle classes and ranks}\label{Sec Cycle classes and ranks}
In this section, $k$ is an arbitrary algebraically closed field.
However, we will soon assume that $\Kun$ holds, which is currently
only known for $k = \bar \F_p$ \cite{KM}.

\begin{Thm} Let $X$ and $Y$ be smooth projective $k$-schemes.
\begin{enumerate}
\fancyitem{(\arabic{enumi})}\label{Item Thm Cl 1} Assume
$\Rk^*(\Spec k, X)$. Then $\Cl(X)$ holds.
\fancyitem{(\arabic{enumi})}\label{Item Thm Cl 2} Assume $\Kun(X)$,
$\Kun(Y)$, and $\Cl((X\times Y)^n)$ for all $n$. Then $\Rk^*(X,Y)$
holds.
\end{enumerate}
\end{Thm}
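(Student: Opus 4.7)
The plan is to handle the two parts separately.

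For part (1), I would simply unwind definitions. Since $\Corr^r(\Spec k, X) = \CH^r_\Q(X)$, any class $\alpha \in \CH^r_\Q(X)$ may be viewed as a correspondence $\Spec k \cto X$; the induced pushforward $\alpha_* \colon H^0(\Spec k) = K \to H^{2r}(X)$ sends $1 \mapsto \cl(\alpha)$. Hence the rank of $\alpha_*$ on the only nonzero source degree is $1$ if $\cl(\alpha) \neq 0$ and $0$ otherwise. The hypothesis $\Rk^*(\Spec k, X)$ therefore forces $\cl_H(\alpha) = 0 \Leftrightarrow \cl_\H(\alpha) = 0$ for every $\alpha$, which is exactly $\Cl(X)$.

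For part (2), the strategy is the one sketched in the introduction: ranks of linear maps are detected by exterior powers, and these can be realised as explicit algebraic cycles on $(X \times Y)^m$. Fix $\alpha \in \Corr^r(X,Y)$ and $i \in \Z$; set $j = i + 2r$. Using $\Kun(X)$ and $\Kun(Y)$, I would choose cycles $p^i_X \in \Corr(X,X)$ and $p^j_Y \in \Corr(Y,Y)$ whose cycle classes are the K\"unneth projectors onto $H^i$ and $H^j$ for \emph{both} $H$ and $\H$ simultaneously. Setting $\beta := p^j_Y \circ \alpha \circ p^i_X$, the induced pushforward $\beta_*$ is literally $\alpha_*|_{H^i(X)}$ followed by projection onto $H^j(Y)$, for either theory, so it suffices to prove that $\rk(\beta_*)$ is independent of the choice of Weil cohomology.

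For this, recall that for a linear map $f \colon V \to W$ of finite-dimensional vector spaces, $\rk(f) \geq m$ iff $\bigwedge^m f \neq 0$. Since $\beta_*$ has source concentrated in a single cohomological degree $i$, it lives in super-parity $i \bmod 2$ inside $\gVec_K$, so by \boldref{Rmk super} the motivic operations behave as follows: if $i$ is even then $H(\bigwedge^m \beta) = \bigwedge^m \beta_*$, and if $i$ is odd then $H(S^m \beta) = \bigwedge^m \beta_*$ (with the exterior power on the right taken in the ordinary sense of linear algebra). Let $\gamma_m$ denote $\bigwedge^m \beta$ or $S^m \beta$ accordingly; this is a cycle in $\CH^*_\Q(X^m \times Y^m) \cong \CH^*_\Q((X \times Y)^m)$. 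By \boldref{Lem vanish}, $\gamma_{m,*} = 0$ iff $\cl(\gamma_m) = 0$, so $\rk(\beta_*) < m$ iff $\cl(\gamma_m) = 0$. Applying $\Cl((X \times Y)^m)$ makes this condition independent of whether we use $H$ or $\H$, and taking the supremum over $m$ gives equality of ranks.

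The main subtlety lies in the parity bookkeeping of \boldref{Rmk super}: the motivic $\bigwedge^m$ of an object concentrated in odd total degree computes the ordinary symmetric power of the underlying vector space, and vice versa. It is precisely this swap that makes the K\"unneth hypothesis indispensable---without first isolating a single cohomological degree, the super-structure would mix $H\even$ and $H\odd$ contributions and no single motivic exterior or symmetric power of $\alpha$ could detect the rank of $\alpha_*$. Apart from this, the argument is essentially a formal consequence of the compatibility of cycle classes with external products, symmetrisers, and antisymmetrisers.
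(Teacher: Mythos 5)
Your proposal is correct and follows essentially the same route as the paper's proof: part (1) is the same one-line argument using $H^*(\Spec k)\cong K$, and for part (2) you use the Künneth cycles to cut $\alpha$ down to $\beta=q\alpha p$ concentrated in degree $i$, then detect $\rk(\beta_*)$ via $\cl(\bigwedge^m\beta)$ for $i$ even and $\cl(S^m\beta)$ for $i$ odd, invoking \boldref{Rmk super} and \boldref{Lem vanish}. The parity bookkeeping you emphasise is exactly the point the paper makes, and the reduction to $\Cl((X\times Y)^m)$ is the same.
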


\begin{proof}
We have $\Corr^r(\Spec k,X) = \CH^r_\Q(X)$, and a cycle $\alpha \in
\CH^r_\Q(X)$ maps to zero under the cycle class map $\cl \colon \CH^r_\Q(X) \to
H^{2r}(X)$ if and only if $\alpha_* \colon H^*(\Spec k) \to H^*(X)$
is zero (and similarly for $\H^*$); see Lemma \ref{Lem vanish}. Now
\ref{Item Thm Cl 1} follows from the assumption that $H^*(\Spec k)$
is $1$-dimensional, so the only possibilities for the rank of
$\alpha_*$ are $0$ and $1$, corresponding to $\alpha_* = 0$ and
$\alpha_* \neq 0$ respectively.

For \ref{Item Thm Cl 2}, let $i$ and $r$ be given, and let $p \in
\Corr(X,X)$ (resp.~$q \in \Corr(Y,Y)$) be an element acting on $H^*$
and $\H^*$ as the $i^{\text{th}}$ (resp.~$i+2r^{\text{th}}$)
K\"unneth projector. For $\alpha \in \Corr^r(X,Y)$, we get an
induced element \vspace{-.2em}
\[
q\circ\alpha\circ p \in \Corr^r(X,Y).
\]
Moreover, the map $(q\alpha p)_* \colon H^i(X) \to H^{i+2r}(Y)$
agrees with the map induced by $\alpha$ (and the same holds for
$\H^i(X) \to \H^{i+2r}(Y)$). Denote this map by $\alpha_i$ (on both
$H^i$ and $\H^i$). First assume $i$ is even, and consider the
induced maps
\[
{\textstyle\bigwedge\nolimits}^j (q\alpha p) \colon
{\textstyle\bigwedge\nolimits}^j X \cto
\textstyle{\bigwedge\nolimits}^j Y
\]
for various $j$. 
%
By Remark \ref{Rmk super}, we have a decomposition
\[
H^*\Big({\textstyle\bigwedge}^j X\Big) =
\bigoplus_{a+b=j}{\textstyle\bigwedge}^a H\even(X) \otimes S^b
H\odd(X).
\]
The map $H^*(\bigwedge^j X) \to H^*(\bigwedge^j Y)$ induced by
$\bigwedge^j(q\alpha p)$ is $\bigwedge^j\alpha_i$ on $\bigwedge^j
H^i(X)$, and $0$ on all other components of $H^*(\bigwedge^j X)$. In
particular, it is nonzero if and only if $j \leq \rk(\alpha_i|_H)$.
Similarly, the map on $\H^*(X)$ induced by $\bigwedge^j(q\alpha p)$
is nonzero on $\H^*(X)$ if and only if $j \leq \rk(\alpha_i|_\H)$.
Thus, the rank of $\alpha_i$ only depends on the vanishing or
nonvanishing of the cycles $\bigwedge^j(q\alpha p)$ under the cycle
class map, by Lemma \ref{Lem vanish}. But we assumed that the
kernels of the cycle class maps are the same for $H^*((X\times
Y)^j)$ and $\H^*((X \times Y)^j)$. This proves the claim if $i$ is
even. If $i$ is odd, we use $S^j$ instead of $\bigwedge^j$ (see
Remark \ref{Rmk super}).
\end{proof}

\begin{Cor}
Let $k$ be an algebraically closed field such that $\Kun(X)$ holds
for all smooth projective $k$-schemes $X$ (e.g. $k = \bar\F_p$
\cite{KM}). Then the following are equivalent:
\begin{enumerate}
\fancyitem{(\arabic{enumi})} $\Cl(X)$ holds for all smooth
projective $k$-schemes $X$; \fancyitem{(\arabic{enumi})}
$\Rk^*(X,Y)$ holds for all smooth projective $k$-schemes $X$ and
$Y$. \hfill\qedsymbol
\end{enumerate}
\end{Cor}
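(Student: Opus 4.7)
The plan is to observe that this Corollary is essentially a direct specialisation of the preceding Theorem once the universal assumption $\Kun(X)$ for all smooth projective $X$ is in force. There should be essentially no work beyond verifying that the hypotheses of the Theorem are satisfied in each direction.

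For $(1) \Rightarrow (2)$, I would fix smooth projective $k$-schemes $X$ and $Y$ and invoke part (2) of the Theorem. The hypotheses $\Kun(X)$ and $\Kun(Y)$ are immediate from the standing assumption on $k$, while $\Cl((X \times Y)^n)$ for all $n \geq 0$ follows from (1) upon noting that the product of smooth projective $k$-schemes is again smooth projective. The conclusion $\Rk^*(X,Y)$ is then exactly what the Theorem delivers.

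For $(2) \Rightarrow (1)$, I would fix a smooth projective $k$-scheme $X$ and invoke part (1) of the Theorem. The only hypothesis needed is $\Rk^*(\Spec k, X)$, which is a special case of (2) since $\Spec k$ is itself smooth projective over $k$. Part (1) of the Theorem then yields $\Cl(X)$.

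There is no substantive obstacle; the content lies entirely in the preceding Theorem and this Corollary is pure bookkeeping. The only sanity check worth making is that the condition $\Rk^*$ in the Corollary and in the Theorem refer to the same property — ranging over all $r \in \Z$, as in \boldref{Def conjectures} — so that the one-line specialisation to $X' = \Spec k$, $Y' = X$ in the reverse implication genuinely supplies the input required for part (1) of the Theorem.
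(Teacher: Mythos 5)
Your proposal is correct and matches the paper's intended argument exactly: the Corollary is stated with a terminal \qedsymbol precisely because it follows immediately from the preceding Theorem, with $(1)\Rightarrow(2)$ via part (2) of the Theorem (using $\Kun$ everywhere and $\Cl((X\times Y)^n)$), and $(2)\Rightarrow(1)$ via part (1) of the Theorem specialised to $\Rk^*(\Spec k, X)$. Nothing further is required.
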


\section{Ranks and dimensions}\label{Sec Ranks and dimensions}
In this section, $k$ will denote an arbitrary field. The main result
of this section (Theorem \ref{Thm dimension}) assumes that $k =
\bar\F_p$, because its proof relies on the Weil conjectures. The
idea is to use alterations to produce smooth hypercoverings that
compute the cohomology of arbitrary separated finite type
$k$-schemes.

\begin{Lemma}\label{Lem compactification}
Let $X$ and $Y$ be separated finite type $k$-schemes, and $f \colon
Y \to X$ a morphism. Then there exist proper $k$-schemes $\bar X$,
$\bar Y$ along with dense open immersions $X \to \bar X$ and $Y \to
\bar Y$ and a morphism $\bar f \colon \bar Y \to \bar X$ such that
the diagram
\begin{equation*}
\begin{tikzcd}
Y \ar{r}\ar{d}[swap]{f} & \bar Y \ar{d}{\bar f} \\
X \ar{r} & \bar X
\end{tikzcd}
\end{equation*}
commutes. If $f$ is proper, then $Y = \bar f^{-1}(X)$.
\end{Lemma}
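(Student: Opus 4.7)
The plan is to invoke Nagata's compactification theorem twice: first to compactify $X$, and then to relatively compactify the morphism $f$ over the compactification of $X$. The density and the final statement will then follow by standard manipulations with closures and the cancellation property of proper morphisms.

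First, I would apply Nagata's compactification theorem to find a proper $k$-scheme $\bar X$ together with a dense open immersion $X \hookrightarrow \bar X$. The composition $g \colon Y \to X \hookrightarrow \bar X$ is a separated morphism of finite type, so I can apply the relative version of Nagata compactification to $g$: there exists a factorisation $Y \hookrightarrow \bar Y' \xrightarrow{\bar f'} \bar X$, where $Y \hookrightarrow \bar Y'$ is an open immersion and $\bar f'$ is proper. Since $\bar X$ is proper over $k$, so is $\bar Y'$.

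To ensure density, I would replace $\bar Y'$ by the scheme-theoretic closure $\bar Y$ of $Y$ in $\bar Y'$. This is a closed subscheme of a proper scheme, hence proper, and $Y \hookrightarrow \bar Y$ is now a dense open immersion. Restricting $\bar f'$ gives the required morphism $\bar f \colon \bar Y \to \bar X$ making the square commute.

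For the final statement, suppose $f$ is proper. The open subscheme $U := \bar f^{-1}(X) \subseteq \bar Y$ contains $Y$, and the restriction $\bar f|_U \colon U \to X$ is separated. The inclusion $j \colon Y \hookrightarrow U$ is an open immersion satisfying $\bar f|_U \circ j = f$; since $f$ is proper and $\bar f|_U$ is separated, the cancellation property for proper morphisms implies $j$ is proper, hence a closed immersion. Thus $Y$ is both open and closed in $U$. Since $Y$ is dense in $\bar Y$ and $U$ is open in $\bar Y$, $Y$ is also dense in $U$, so $Y = U = \bar f^{-1}(X)$.

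The only real obstacle is the invocation of Nagata compactification in its relative form; once that is accepted, everything else is formal. The density reduction via scheme-theoretic closure and the cancellation argument for the last claim are routine.
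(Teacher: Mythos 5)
Your proof is correct and follows essentially the same route as the paper: Nagata compactification of $X$, relative Nagata compactification of $Y \to \bar X$, passing to closures for density, and then cancellation of properness plus density of $Y$ in $\bar f^{-1}(X)$ to obtain the last claim. The only cosmetic difference is that the paper phrases the final step via closedness of the image of $Y \to \bar f^{-1}(X)$ rather than your (equivalent) observation that $Y$ is clopen and dense in $\bar f^{-1}(X)$.
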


\begin{proof}
Let $X \to \bar X$ be a Nagata compactification \cite{Nag1}.
Replacing $\bar X$ by the closure of $X$ in $\bar X$, we may assume
that $X$ is dense in $\bar X$. Let $\bar Y$ be a relative Nagata
compactification of $Y \to \bar X$ \cite{Nag2}. Again, we may assume
that $Y$ is dense in $\bar Y$.
Then $\bar Y$ is proper over $\bar X$, hence proper over $k$ since
$\bar X$ is. This proves the first statement. The second statement
follows because the scheme theoretic image of the morphism of proper
$X$-schemes $Y \to \bar f^{-1}(X)$ is closed. Since $Y$ is also
dense in $\bar f^{-1}(X)$ (in fact, in $\bar Y$), this forces
equality.
\end{proof}

\begin{Lemma}\label{Lem resolution of diagram}
Let $(\mathscr J, \leq)$ be a poset (viewed as category) such that
for every $i \in \mathscr J$ there are only finitely many $j \in
\mathscr J$ with $j \leq i$. Let $D_0 \colon \mathscr J\op \to
\Sch_k$ be a diagram of separated finite type $k$-schemes. Then
there exists a diagram $D_1 \colon \mathscr J\op \to \Sch_k$ and a
map $D_1 \to D_0$ such that
\begin{enumerate}
\fancyitem{(\arabic{enumi})} each $D_1(i) \to D_0(i)$ is proper and
surjective, and \fancyitem{(\arabic{enumi})} each $D_1(i)$ is
quasi-projective over $k$ and regular.
\end{enumerate}
\end{Lemma}

\begin{proof}
We will construct $D_1 \to D_0$ as a functor $D \colon (\mathscr J
\times [1])\op \to \Sch_k$ with the desired properties. For $i \in
\mathscr J$ and $J \subseteq \mathscr J$, write $J_{<i} = \{j \in J\
|\ j < i\}$ and $J_{\leq i} = \{j \in J\ |\ j \leq i\}$, and set
\[
C_{J,i} := \Big(\mathscr J_{\leq i} \times \{0\}\Big) \cup
\Big(J_{\leq i} \times \{1\}\Big) \subseteq \mathscr J_{\leq i}
\times [1].
\]
If $D$ is defined on $(J \times [1])\op$ and $i \in \mathscr J$ is
arbitrary, we write
\[
L_J(i) := \lim_{C_{J,i}\op}\ D.
\]
Since this is a finite limit, it exists in $\Sch_k$ and is separated
and of finite type. We always have $L_\varnothing = D_0$, and we
have $L_{\mathscr J_{\leq i}}(i) = D_1(i)$ and write $L(i) :=
L_{\mathscr J_{<i}}(i)$ when these are defined.

By induction on the size of the finite poset $\mathscr J_{<i}$, we construct $D_1(i)$
as a cone over the restriction of $D$ to $C_{\mathscr J_{<i},i} =
(\mathscr J_{\leq i} \times [1]) \setminus (i,1)$, such that
moreover the natural map $D_1(i) \to L(i)$ is proper and surjective.
If $i$ is minimal (i.e.\ $\mathscr J_{<i} = \varnothing$), take
$D_1(i) \twoheadrightarrow D_0(i)$ an alteration \cite[Theorem~4.1]{dJ}. Since $L(i) = L_\varnothing(i) = D_0(i)$, the map $D_1(i)
\to L(i)$ is proper and surjective by construction.

Now take $i \in \mathscr J$ arbitrary, and assume $D_1(j)$ has been
defined and $D_1(j) \to L(j)$ is proper and surjective for all $j$
with $|\mathscr J_{< j}| < |\mathscr J_{< i}|$; in particular for
all $j \in \mathscr J_{<i}$. Then $L(i)$ is defined, and we take
$D_1(i) \twoheadrightarrow L(i)$ any alteration \cite{dJ}. The
required functoriality $D_1(i) \to D_1(j)$ for $j < i$ comes from
the fact that $L(i)$ is a cone over the restriction of $D$ to
$C_{\mathscr J_{<i},i}$.

If $J \subseteq \mathscr J_{<i}$ is downward closed and $j \in J$ is
a maximal element, then $J\setminus\{j\}$ is downward closed, and we
have
\[
L_J(i) = L_{J\setminus\{j\}}(i) \underset{L(j)}\times D_1(j);
\]
in particular $L_J(i) \to L_{J\setminus\{j\}}(i)$ is proper and
surjective. By induction on $|J|$, we conclude that $L(i) \to
D_0(i)$ and hence $D_1(i) \to D_0(i)$ is proper and surjective.
Finally, $D_1(i)$ is a regular, quasi-projective $k$-scheme by
construction.
\end{proof}
We note that the proof above uses nothing special about schemes, and
it generalises without difficulty to the categorical setting of
\cite[exp.~V$^{\text{bis}}$,~5.1.4]{SGA4II}.

\begin{Cor}\label{Cor diagram of hypercoverings}
Let $(\mathscr J, \leq)$ be a poset such that for every $i \in
\mathscr J$ there are only finitely many $j \in \mathscr J$ with $j
\leq i$. Let $D \colon \mathscr J\op \to \Sch_k$ be a diagram of
separated finite type $k$-schemes. Then there exists a diagram
$D_\bullet \colon \mathscr J\op \times \Delta_+\op \to \Sch_k$ such
that the following hold.
\begin{enumerate}
\fancyitem{(\arabic{enumi})} $D_{-1} = D$;
\fancyitem{(\arabic{enumi})} For each $i \in \mathscr J$, the
diagram $D_\bullet(i)$ is a proper hypercovering of $D(i)$;
\fancyitem{(\arabic{enumi})} For each $i \in \mathscr J$ and each $n
\in \Z_{\geq 0}$, the scheme $D_n(i)$ is quasi-projective over $k$
and regular.
\end{enumerate}
\end{Cor}

\begin{proof}
Apply the procedure of
\cite[exp.~V$^{\text{bis}}$,~5.1.4--5.1.7,~5.2.4]{SGA4II} (see also
\cite[6.2.8]{Hdg3}), replacing $k$-schemes by $\mathscr J$-indexed
diagrams of $k$-schemes, and resolution of singularities (or
alterations) by Lemma \ref{Lem resolution of diagram}.
\end{proof}

\begin{Lemma}\label{Lem rank of connecting map}
Let $0 \to A^\bullet \to B^\bullet \to C^\bullet \to 0$ be a short
exact sequence of chain complexes of finite dimensional vector
spaces. Then
\[
\rk\Big(\delta^i \colon H^i(C^\bullet) \to H^{i+1}(A^\bullet)\Big) =
\rk\big(d_B^i\big) - \rk\big(d_A^i\big) - \rk\big(d_C^i\big).
\]
\end{Lemma}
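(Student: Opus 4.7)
The plan is purely linear-algebraic. Since everything takes place over the field $k$, the short exact sequence $0 \to A^i \to B^i \to C^i \to 0$ is termwise split; after choosing such splittings, I identify $B^i = A^i \oplus C^i$ and the differential takes the block form
\[
d_B^i = \begin{pmatrix} d_A^i & h^i \\ 0 & d_C^i \end{pmatrix}
\]
for some auxiliary maps $h^i \colon C^i \to A^{i+1}$. The equation $d_B^{i+1} d_B^i = 0$ forces $d_A^{i+1} h^i + h^{i+1} d_C^i = 0$, so $h^i(\ker d_C^i) \subseteq \ker d_A^{i+1}$, and unwinding the construction of the snake connecting map gives the explicit formula $\delta^i[c] = [h^i c]$ for $c \in \ker d_C^i$.

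The computational heart is then to evaluate $\rk d_B^i$ by projecting onto the second summand: the image of $d_B^i$ surjects onto $\im d_C^i \subseteq C^{i+1}$, while its fibre over $0$ inside $\im d_B^i$ is precisely $\im d_A^i + h^i(\ker d_C^i) \subseteq A^{i+1}$, which gives
\[
\rk d_B^i = \rk d_C^i + \dim\bigl(\im d_A^i + h^i(\ker d_C^i)\bigr).
\]
On the other hand, the formula $\delta^i[c] = [h^i c]$ identifies $\im(\delta^i)$ inside $\ker d_A^{i+1}/\im d_A^i$ with $\bigl(h^i(\ker d_C^i) + \im d_A^i\bigr)/\im d_A^i$, so
\[
\rk \delta^i = \dim\bigl(\im d_A^i + h^i(\ker d_C^i)\bigr) - \rk d_A^i.
\]
Subtracting the two displays yields the asserted identity $\rk\delta^i = \rk d_B^i - \rk d_C^i - \rk d_A^i$.

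There is no real obstacle: once the auxiliary map $h^i$ is identified as the origin of the snake connecting homomorphism, everything reduces to bookkeeping with a $2 \times 2$ block matrix. A more indirect route would be to work inside the long exact sequence in cohomology, rewrite each $\dim H^i$ in terms of ranks of differentials via $\dim H^i(A^\bullet) = \dim A^i - \rk d_A^i - \rk d_A^{i-1}$ (and similarly for $B$ and $C$), and induct on $i$ starting from a degree where everything vanishes; this reproduces the same identity but obscures the direct role of $\delta^i$, so I would not pursue it.
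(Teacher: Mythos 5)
Your argument is correct and complete. After choosing termwise splittings you obtain the block presentation $d_B^i = \bigl(\begin{smallmatrix} d_A^i & h^i \\ 0 & d_C^i \end{smallmatrix}\bigr)$, the connecting map is $[c] \mapsto [h^i c]$, and both $\rk d_B^i$ and $\rk \delta^i$ are expressed through $\dim\bigl(\im d_A^i + h^i(\ker d_C^i)\bigr)$; the identity drops out by subtraction. Your verification that $h^i(\ker d_C^i) \subseteq \ker d_A^{i+1}$ (and, implicitly, that $h^i(\im d_C^{i-1}) \subseteq \im d_A^i$) from $d_A^{i+1} h^i + h^{i+1} d_C^i = 0$ is exactly what is needed to make sense of the formula for $\delta^i$.

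This is a genuinely different route from the paper's. The paper first observes that all four ranks depend only on the stupid truncation $\sigma_{\geq i}\sigma_{\leq i+1}$ of the three complexes --- for $\delta^i$ this is justified by the factorisation $H^i(\sigma_{\geq i} C^\bullet) \twoheadrightarrow H^i(C^\bullet) \to H^{i+1}(A^\bullet) \hookrightarrow H^{i+1}(\sigma_{\leq i+1} A^\bullet)$ --- and then applies the snake lemma to the square $d^i \colon (A^i \to B^i \to C^i) \to (A^{i+1} \to B^{i+1} \to C^{i+1})$, reading off the identity from additivity of dimensions in the resulting six-term exact sequence together with $\dim A^i - \dim B^i + \dim C^i = 0$. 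The paper's proof avoids any choice of splitting and is very compact once one notices the truncation trick; yours is more hands-on and makes the provenance of each rank visible by tracking the auxiliary map $h^i$. Your closing remark sketches a third variant (via $\dim H^i = \dim A^i - \rk d_A^i - \rk d_A^{i-1}$ and induction), which is related to but not the same as the paper's argument: the paper does not induct, it works degreewise after truncation. All three are sound elementary linear algebra.
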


\begin{proof}
All ranks in question only depend on the stupid truncations
$\sigma_{\geq i}\sigma_{\leq i+1}$ of the complexes. For $d_A^i$,
$d_B^i$, and $d_C^i$ this is clear, and for $\delta^i$ this follows
because of the factorisation
\[
H^i(\sigma_{\geq i} C^\bullet) \twoheadrightarrow H^i(C^\bullet)
\stackrel{\delta^i}\to H^{i+1}(A^\bullet) \hookrightarrow
H^{i+1}(\sigma_{\leq i+1} A^\bullet),
\]
noting that precomposing by surjections and postcomposing by
injections does not alter ranks. Now the snake lemma gives a long
exact sequence
\[0 \to \ker d_A^i \to \ker d_B^i \to \ker d_C^i \stackrel{\delta^i}\to \coker d_A^i \to \coker d_B^i \to \coker d_C^i \to 0.
\]
Additivity of dimension in short exact sequences gives
\begin{equation}
\dim(\ker d_A^i) - \dim(\ker d_B^i) + \dim(\ker d_C^i) -
\rk(\delta^i) = 0.\label{Eq rank 1}
\end{equation}
On the other hand, exactness of $0 \to A^\bullet \to B^\bullet \to
C^\bullet \to 0$ gives
\begin{equation}
\dim A^i - \dim B^i + \dim C^i = 0.\label{Eq rank 2}
\end{equation}
Subtracting \eqref{Eq rank 1} from \eqref{Eq rank 2} gives the
result.
\end{proof}

\begin{Cor}\label{Cor rank of map on cohomology}
Let $f \colon A^\bullet \to B^\bullet$ be a morphism of chain
complexes of finite dimensional vector spaces. Then
\[
\rk\left(H^i(f)\right) = \rk\left(\left(\begin{smallmatrix}d_A^i & 0
\\ f^i & -d_B^{i-1} \end{smallmatrix}\right) \colon A^i \oplus
B^{i-1} \to A^{i+1} \oplus B^i\right) - \rk\left(d_A^i\right) -
\rk\left(d_B^{i-1}\right).
\]
\end{Cor}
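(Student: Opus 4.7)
The plan is to bootstrap from \boldref{Lem rank of connecting map} by applying it to the mapping cone of $f$. Introduce the cone complex $\operatorname{cone}(f)^\bullet$ with $\operatorname{cone}(f)^n = A^{n+1} \oplus B^n$ and differential
$$d^n_{\operatorname{cone}(f)} = \begin{pmatrix} -d_A^{n+1} & 0 \\ f^{n+1} & d_B^n \end{pmatrix} \colon A^{n+1} \oplus B^n \longrightarrow A^{n+2} \oplus B^{n+1}.$$
This sits in a termwise split short exact sequence of complexes of finite dimensional vector spaces
$$0 \longrightarrow B^\bullet \longrightarrow \operatorname{cone}(f)^\bullet \longrightarrow A^\bullet[1] \longrightarrow 0,$$
whose connecting homomorphism $\delta^i \colon H^{i+1}(A^\bullet) = H^i(A^\bullet[1]) \to H^{i+1}(B^\bullet)$ coincides, up to sign, with $H^{i+1}(f)$ by a routine cocycle chase.

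Applying \boldref{Lem rank of connecting map} to this short exact sequence then gives
$$\rk(H^{i+1}(f)) = \rk(d^i_{\operatorname{cone}(f)}) - \rk(d_B^i) - \rk(d^i_{A^\bullet[1]}).$$
Since $d^i_{A^\bullet[1]} = -d_A^{i+1}$, and since sign flips of entire rows or columns of a matrix do not affect its rank, I may freely replace $d^i_{\operatorname{cone}(f)}$ by the matrix $\left(\begin{smallmatrix} d_A^{i+1} & 0 \\ f^{i+1} & d_B^i \end{smallmatrix}\right)$. Reindexing $i \mapsto i-1$ then yields exactly the identity claimed in the statement.

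I do not expect any real obstacle. The only point requiring care is the identification of the cone's connecting map with $H^\bullet(f)$, which is the standard calculation: a cocycle $a \in A^{i+1}$ lifts to $(a,0) \in \operatorname{cone}(f)^i$, whose cone differential is $(-d_A a, f(a)) = (0, f(a))$, and the second coordinate represents $[f(a)] \in H^{i+1}(B^\bullet)$. Once this is recorded, the corollary is a one-line consequence of \boldref{Lem rank of connecting map}.
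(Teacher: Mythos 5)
Your proposal is correct and follows essentially the same route as the paper's own (very terse) proof: both apply \boldref{Lem rank of connecting map} to the termwise split short exact sequence $0 \to B^\bullet \to C^\bullet(f) \to A^\bullet[1] \to 0$ and identify the connecting map with $H^{i+1}(f)$. You simply supply the bookkeeping (the explicit cone differential, the sign-irrelevance for ranks, and the reindexing $i \mapsto i-1$) that the paper leaves implicit.
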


\begin{proof}
Apply Lemma \ref{Lem rank of connecting map} to the short exact
sequence
\[
0 \to B^\bullet[-1] \to C^\bullet(f) \to A^\bullet \to 0,
\]
noting that the boundary homomorphism of this sequence is $H^i(f)$.
\end{proof}

\begin{Thm}\label{Thm dimension}
Assume $k = \bar\F_p$. If $\Rk(X,Y)$ holds for all smooth projective
$k$-schemes $X$ and $Y$, then
\begin{enumerate}
\fancyitem{(\arabic{enumi})}\label{Item Thm Dim} $\Dim\cs(X)$ holds
for every separated finite type $k$-scheme $X$;
\fancyitem{(\arabic{enumi})}\label{Item Thm Rk} $\Rk\cs(X,Y)$ holds
for all separated finite type $k$-schemes $X$ and $Y$.
\end{enumerate}
\end{Thm}

\begin{proof}
Let $X$ and $Y$ be separated $k$-schemes of finite type, and let $f
\colon Y \to X$ be a proper morphism. Choose a commutative diagram
as in Lemma \ref{Lem compactification}
\begin{equation}
\begin{tikzcd}
Y \ar[open]{r}\ar{d}[swap]{f} & \bar Y \ar{d}{\bar f} & W \ar[closed']{l}[swap]{w}\ar{d}{g}\\
X \ar[open]{r} & \bar X & V \ar[closed']{l}{v}\punct{,}
\end{tikzcd}\label{Dia open and closed}
\end{equation}
where $\bar X$ and $\bar Y$ are compatible compactifications of $X$
and $Y$ respectively such that $X$ (resp.~$Y$) is dense in $\bar X$
(resp.~$\bar Y$) with complement $V$ (resp.~$W$), and $g$ denotes
$\bar f|_W$. Applying Corollary \ref{Cor diagram of hypercoverings}
to the right hand square of \eqref{Dia open and closed}, we get a
commutative diagram
\begin{equation}
\begin{tikzcd}
W_\bullet \ar{r}{w_\bullet}\ar{d}[swap]{g_\bullet} & \bar Y_\bullet \ar{d}{\bar f_\bullet} \\
V_\bullet \ar{r}{v_\bullet} & \bar X_\bullet
\end{tikzcd}\label{Dia hypercovering}
\end{equation}
of augmented simplicial schemes such that each $W_i$ is smooth
projective for~$i \geq 0$ and $W_\bullet$ is a proper hypercover of
$W_{-1} = W$, and similarly for $V$, $\bar X$, and $\bar Y$.

By Axiom \ref{Def additional axioms} \ref{Ax compactly supported
cohomology}, \ref{Ax mapping cone}, and \ref{Ax comparison of long
exact sequence}, we get a commutative square of long exact sequences
\begin{equation}\label{Dia commutative square of long exact sequences}
\begin{tikzcd}[row sep=.6em,column sep=1.9em,crossing over clearance=1ex]
 & \makebox*{}[l]{$\cdots$} \ar[shorten=1.5em]{rrr} & & & \mathclap{H^i\cs(Y)} \ar[shorten=1.5em]{rrr}\ar{dd} & & & \mathclap{H^i(\bar Y)} \ar[shorten=1.5em]{rrr}\ar{dd} & & & \mathclap{H^i(W)} \ar[shorten=1.5em]{rr}\ar{dd} & & \makebox*{}[r]{$\cdots$}\\
\makebox*{}[l]{$\cdots$} \ar[crossing over,shorten=1.5em]{rr} & & \mathclap{H^i\cs(X)} \ar[crossing over,shorten=1.5em]{rrr}\ar[start anchor=center, end anchor=center, shorten=2em]{rru} & & & \mathclap{H^i(\bar X)} \ar[crossing over,shorten=1.5em]{rrr}\ar[start anchor=center, end anchor=center, shorten=2em]{rru} & & & \mathclap{H^i(V)} \ar[crossing over,shorten=1.5em]{rrr}\ar[start anchor=center, end anchor=center, shorten=2em]{rru} & & & \makebox*{}[r]{$\cdots$} & \\
 & \makebox*{}[l]{$\cdots$} \ar[end anchor={[xshift=-.8em]},shorten=1.5em]{rrr} & & & \mathclap{H^i(C_\bullet(w))} \ar[start anchor={[xshift=.8em]},shorten=1.5em]{rrr} & & & \mathclap{H^i(\bar Y_\bullet)} \ar[shorten=1.5em]{rrr} & & & \mathclap{H^i(W_\bullet)} \ar[shorten=1.5em]{rr} & & \makebox*{}[r]{$\cdots$}\\
\makebox*{}[l]{$\cdots$} \ar[end
anchor={[xshift=-.75em]},shorten=1.5em]{rr} & &
\mathclap{H^i(C_\bullet(v))} \ar[start
anchor={[xshift=.75em]},shorten=1.5em]{rrr}\ar[from=uu, crossing
over]\ar[start anchor=center, end anchor=center, shorten=2em]{rru} &
& & \mathclap{H^i(\bar X_\bullet)}
\ar[shorten=1.5em]{rrr}\ar[from=uu, crossing over]\ar[start
anchor=center, end anchor=center, shorten=2em]{rru} & & &
\mathclap{H^i(V_\bullet)} \ar[shorten=1.5em]{rrr}\ar[from=uu,
crossing over]\ar[start anchor=center, end anchor=center,
shorten=2em]{rru} & & & \makebox*{}[r]{$\cdots$} &
\end{tikzcd}
\end{equation}
whose vertical maps are isomorphisms. Thus $H^i\cs(X)$ is computed
by $H^i(C_\bullet(v))$, which by \ref{Ax cohomology of simplicial
motive} is computed by the hypercohomology spectral sequence
\begin{equation}\label{Eq spectral sequence}
E_1^{p,q} = H^q(C_p(v)) \Ra H^{p+q}(C_\bullet(v)).
\end{equation}
If $\bar X_p$ and $V_p$ are defined over some finite field $k_0$ for
all $p \leq i+1$, then the computation of $H^i(C_\bullet(v))$ only
involves maps between cohomology groups of smooth projective
varieties defined over $k_0$. Moreover, the action of the
$|k_0|$-power geometric Frobenius (as an algebraic cycle) on
$H^q(C_p(v))$ is pure of weight~$q$ \cite[Corollary~1(2)]{KM} (see also
Corollary \ref{Cor characteristic polynomial}).

Since $\Ext^i_{K[x]}(M,N) = 0$ for all $i$ and for all
$K[x]$-modules $M$ and $N$ that are pure of different weights, the
spectral sequence \eqref{Eq spectral sequence} degenerates on the
$E_2$ page and the filtration on $E_\infty$ canonically splits:
\begin{equation}
H^i\cs(X) = H^i(C_\bullet(v)) \cong \bigoplus_{p+q=i}
E_2^{p,q},\label{Eq canonical splitting}
\end{equation}
where $E_2^{p,q}$ is given by
\[
E_2^{p,q} = \frac{\ker\Big(d^p \colon H^q(C_p(v)) \to
H^q(C_{p+1}(v))\Big)}{\im \Big(d^{p-1} \colon H^q(C_{p-1}(v)) \to
H^q(C_p(v))\Big)}.
\]
A dimension count gives
\[
\dim E_2^{p,q} = \dim\big(H^q(C_p(v))\big) - \rk(d^p) -
\rk(d^{p-1}).
\]
By assumption and by Remark \ref{Rmk Rk motives}, each of these
numbers is independent of the Weil cohomology theory $H$. Then the
same holds for $\dim E_2^{p,q}$ and therefore also for $\dim
H^i\cs(X)$, which proves \ref{Item Thm Dim} for $X$.

By \ref{Ax cohomology of simplicial motive}, the natural map
$f_\bullet \colon C_\bullet(v) \to C_\bullet(w)$ of cosimplicial
all-or-nothing motives induces a morphism of spectral sequences
\[
f_{\bullet,*} \colon E_*^{p,q}(X_\bullet) \to E_*^{p,q}(Y_\bullet).
\]
On the $E_1$ pages, this gives commutative diagrams
\begin{equation*}
\begin{tikzcd}[column sep=1.5em]
\ldots \ar{r} & H^q(C_p(v)) \ar{r}{d^p_v}\ar{d}{f_{p,*}} & H^q(C_{p+1}(v)) \ar{r}\ar{d}{f_{p+1,*}} & \ldots \\
\ldots \ar{r} & H^q(C_p(w)) \ar{r}{d^p_w} & H^q(C_{p+1}(w)) \ar{r} &
\ldots\punct{.}
\end{tikzcd}
\end{equation*}
By Corollary \ref{Cor rank of map on cohomology}, we get
\begin{equation}
\rk E_2^{p,q}(f_{\bullet,*}) = \rk\left(\begin{smallmatrix}d^p_v & 0
\\ f_{p,*} & -d^{p-1}_w\end{smallmatrix}\right) -
\rk\left(d^p_v\right) - \rk\left(d^{p-1}_w\right).\label{Eq rank
E_2}
\end{equation}
By assumption and by Remark \ref{Rmk Rk motives}, the right hand
side is independent of the Weil cohomology theory $H$, hence so is
the left hand side. Then the same holds for the rank of
$f_{\bullet,*} \colon H^i(C_\bullet(v)) \to H^i(C_\bullet(w))$ since
$f_{\bullet,*}$ respects the canonical splitting of \eqref{Eq
canonical splitting}, which proves \ref{Item Thm Rk} by diagram
\eqref{Dia commutative square of long exact sequences}.
\end{proof}

\begin{Rmk}\label{Rmk combine proofs}
Instead of the diagrammatic argument given above, one would be
tempted to use the strong version of the alterations result
\cite{dJ}.

This gives a proper hypercover $X_\bullet \to X$ along with an
embedding $X_\bullet \to \bar X_\bullet$ such that each $\bar X_n$
is smooth projective, and the complement of $X_n \subseteq \bar X_n$
is a simple normal crossings divisor $D_n$.

Assuming $\Rk(Y,Z)$ holds for smooth projective $k$-schemes $Y$ and
$Z$, a simplicial argument for $D_n$ shows that the dimension of
$H^i\cs(X_n)$ (hence also $H^i(X_n)$) is independent of the Weil
cohomology theory \cite[p.~29]{Katz}. This again uses the Weil
conjectures to conclude degeneration of a spectral sequence.

Then the spectral sequence for the hypercovering $X_\bullet \to X$
computes $H^i(X)$ in terms of $H^i(X_n)$. However, now the purity
argument no longer applies, and we have no idea on what page the
spectral sequence might degenerate. So even knowing $\Rk(Y,Z)$ for
smooth quasi-projective $k$-schemes $Y$ and $Z$ does not imply
$\Dim\cs(X)$ (or its variant $\Dim(X)$ for cohomology $H^i$) through
this method.

The above argument is a way around this problem. The author is not
aware of a place in the literature where this argument is carried
out, but variants of it might have been known to experts.
\end{Rmk}

\begin{Rmk}\label{Rmk WZ}
To prove Theorem \ref{Thm dimension} \ref{Item Thm Dim} and
\ref{Item Thm Rk} only in the case where $X$ and $Y$ are proper, one
can carry out the proof above without talking about (simplicial)
mapping cones. Indeed, applying the proof above to the morphism
$f_\bullet \colon Y_\bullet \to X_\bullet$ (of simplicial schemes)
instead of the morphism $f_\bullet \colon C_\bullet(v) \to
C_\bullet(w)$ (of cosimplicial all-or-nothing motives) shows that
the dimensions of $H^i(X)$ and $H^i(Y)$ as well as the rank of $f^*
\colon H^i(X) \to H^i(Y)$ do not depend on $H$.

Applying this to the closed immersion $v \colon V \to \bar X$ as in
\eqref{Dia open and closed}, we see that
\begin{align*}
\dim H^i\cs(X) &= \dim\Big(\ker H^i(v)\Big) + \dim\Big(\coker H^{i-1}(v)\Big) \\
&= \dim H^i\big(\bar X\big) - \rk\big(H^i(v)\big) + \dim
H^{i-1}\big(V\big) - \rk\big(H^{i-1}(v)\big),
\end{align*}
so finally $\dim H^i\cs(X)$ does not depend on $H$. This recovers
Theorem \ref{Thm dimension}~\ref{Item Thm Dim} without ever using
simplicial mapping cones. (The mapping cone of complexes is still
implicitly used through Corollary \ref{Cor rank of map on
cohomology}.)

One can also obtain Theorem \ref{Thm dimension}~\ref{Item Thm Rk} in
this way (at least for \'etale or crystalline cohomology), by
looking at the different weight parts as in \eqref{Eq canonical
splitting} and applying Corollary \ref{Cor rank of map on
cohomology} in the case where the complexes themselves are mapping
cones.

This requires a different kind of axiom comparable to \ref{Ax
comparison of long exact sequence}, using the mapping cone of
complexes instead. However, mapping cones in the derived category
are only unique up to non-unique isomorphism, and to get a
commutative diagram like \eqref{Dia commutative square of long exact
sequences} one needs to choose mapping cones functorially. We use
the ones from simplicial schemes, but there are other possibilities.
For example, Cisinki--D\'eglise's \emph{mixed Weil cohomologies}
\cite{CisDeg1} work in the dg setting, which may well be enough to
make this part of the argument work.
\end{Rmk}

\section{Dimensions and cycle class maps}\label{Sec Dimensions and cycle class maps}
In this section, the ground field $k$ is allowed to be arbitrary
again. We start with a Bertini irreducibility theorem. We do some
extra work in Lemma \ref{Lem Bertini containing subvariety} to avoid
extending the base field, using the Bertini irreducibility theorem
of Charles--Poonen \cite{ChaPoo} as well as the classical one
\cite[Theorem~6.3(4)]{JouBer}.

The main application of these Bertini theorems is Corollary \ref{Cor
difference of irreducible cycles}, which we use to prove the
implication $\text{(3b)} \Ra \text{(1)}$ of Theorem \ref{Thm
equivalent}. The idea is that if $Z \subseteq X$ is an effective
codimension $m$ cycle on a smooth projective variety $(X,H)$, then
for $m$ general sections $H_1, \ldots, H_m \in |H|$ containing $Z$,
the intersection $H_1 \cap \ldots \cap H_m$ contains only one new
component $Z'$, which is smooth away from $Z$ (in particular
reduced).
%
Therefore,
\[
[Z] = H^m - [Z'] \in \CH^m(X),
\]
which realises $[Z]$ as a difference of two irreducible cycles. We
do something similar for an arbitrary (not necessarily effective)
cycle $\alpha \in \CH^m(X)$.

We suggest the reader skip ahead to Corollary \ref{Cor difference of
irreducible cycles} on a first reading.

\begin{Def}
Let $X$ be a separated $k$-scheme of finite type. Consider the
following condition on a closed subscheme $Z \subseteq X$:
{\setitemize{leftmargin=*, align=parleft, labelsep=1cm}
\begin{itemize}
\item[$(\Irr)$:] For every irreducible component $X_i$ of $X_{\bar k}$, there exists a unique irreducible component $Z_i$ of $Z_{\bar k}$ contained in $X_i$, and moreover $Z_i = X_i \cap Z_{\bar k}$.
\end{itemize}}

\smallskip\noindent
If $X$ is geometrically normal, then $X_i \cap X_j = \varnothing$
for $i \neq j$. In this case, the final statement  of $(\Irr)$ is
automatic: we clearly have $Z_i \subseteq X_i \cap Z_{\bar k}$, and
all other components $Z_j$ of $Z_{\bar k}$ are disjoint from $X_i
\cap Z_{\bar k}$.
\end{Def}


\begin{Lemma}\label{Lem Bertini containing subvariety}
Let $(X,H)$ be a projective $k$-scheme all of whose components have
dimension $n \geq 2$, let $Z \subseteq X$ be a geometrically reduced
closed subscheme of pure dimension $\ell \neq 0,n$, and let $Y_1,
\ldots, Y_s \subseteq X$ be integral subschemes of dimension $< n$
that are not contained in $Z$. Assume that $X$ is smooth away from
$Z$ and at the generic points of $Z$. Then for $d \gg 0$, there
exists an element $D \in |dH|$ containing $Z$ such that
\begin{itemize}
\item $D$ is smooth away from $Z$;
\item $D$ is smooth at the generic points of $Z$;
\item the divisor $D \setminus Z$ of $X \setminus Z$ satisfies $(\Irr)$;
\item $D$ does not contain any of the $Y_j$.
\end{itemize}
\end{Lemma}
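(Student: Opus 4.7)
The plan is to take $D$ as a general element of the linear subsystem
\[
V_d := H^0\!\big(X, \mathcal I_Z(dH)\big) \subseteq H^0\big(X, \mathcal O(dH)\big)
\]
for $d \gg 0$, and to verify that each of the four required properties cuts out a subset of $V_d$ large enough (a nonempty Zariski open, or a positive-density subset of $V_d(k)$ when $k$ is finite) that a $D$ satisfying all four simultaneously exists. I would first replace $X$ by a projective closure (extending $H$ to a very ample divisor), since none of the four conditions is affected by what happens on the added locus. Serre vanishing on the projective closure then provides, for $d \gg 0$: (a) $\mathcal I_Z(dH)$ is globally generated on the open $X \setminus Z$; (b) the natural map $V_d \to \bigoplus_i (\mathcal I_Z/\mathcal I_Z^2)(dH) \otimes \kappa(\eta_i)$ is surjective, where $\eta_1,\ldots,\eta_r$ are the generic points of $Z$; and (c) for each $Y_j \not\subseteq Z$, the restriction $V_d \to H^0(Y_j, \mathcal O(dH))$ is nonzero.

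Given (a)--(c), the two smoothness conditions and the $Y_j$-avoidance follow from classical Bertini-type openness arguments. Global generation on $X \setminus Z$ combined with smoothness of $X$ away from $Z$ implies that $D \cap (X \setminus Z)$ is smooth for $D$ in a dense open subset of $V_d$ (and, over finite $k$, a positive-density subset by Poonen's Bertini smoothness). Surjectivity onto $(\mathcal I_Z/\mathcal I_Z^2)(dH) \otimes \kappa(\eta_i)$, together with smoothness of $X$ at each $\eta_i$, means that the generic $D \in V_d$ is locally defined near $\eta_i$ by a regular element of $\mathcal I_Z$ whose class in $\mathcal I_Z/\mathcal I_Z^2$ is nonzero, which gives smoothness of $D$ at $\eta_i$; this excludes only a proper closed subset of $V_d$. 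Finally (c) says that $\{D \in V_d : Y_j \subseteq D\}$ is a proper linear subspace, so $Y_j \not\subseteq D$ defines a nonempty open condition.

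The main obstacle is the $(\Irr)$ condition on $D \setminus Z$ as a divisor on $X \setminus Z$, which must hold simultaneously with the previous three and must be arranged over the original field $k$ without base extension; this is precisely why the statement invokes both the classical Bertini irreducibility of Jouanolou and the finite-field version of Charles--Poonen. The plan is to consider the morphism $\varphi_d \colon X \setminus Z \to \mathbb P(V_d^\vee)$ given by the base-point-free system $V_d|_{X \setminus Z}$ for $d \gg 0$. Then $(D \setminus Z) \cap X_i$ is the preimage under $\varphi_d|_{X_i \setminus Z_{\bar k}}$ of a hyperplane in $\mathbb P(V_d^\vee)$, and we want this preimage to be geometrically irreducible on each component $X_i$ of $X_{\bar k}$ (so that it is the unique component of $(D \setminus Z)_{\bar k}$ on $X_i$). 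Over infinite $k$, Jouanolou's theorem \cite[Thm.~6.3(4)]{JouBer} gives an open dense set of hyperplanes in $\mathbb P(V_d^\vee)$ realising this; over finite $k$, the Charles--Poonen theorem \cite{ChaPoo} gives a positive density of such hyperplanes defined over $k$, Galois-equivariantly so that the resulting $D$ descends to $k$. Intersecting with the previous three conditions (each dense open, hence of full density over finite $k$) gives a subset of $V_d$ that is still nonempty (resp.\ of positive density), and any element of it is the required $D$.
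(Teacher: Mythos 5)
Your overall strategy --- combine a Bertini smoothness theorem with a Bertini irreducibility theorem on the restricted system $V_d = H^0(X,\mathcal I_Z(dH))$ and handle infinite and finite base fields separately --- is the same as the paper's. The Serre-vanishing argument for the conormal surjectivity at the $\eta_i$, the observation that $D$ being regular at $\eta_i$ is exactly $f\not\in\mathfrak m_{\eta_i}^2$, and the reduction of $Y_j$-avoidance to a linear condition are all correct and in the spirit of the paper.

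There is, however, a genuine gap in the finite-field case, precisely in the step you try to outsource to \enquote{Poonen's Bertini smoothness.} Poonen's basic Bertini smoothness theorem applies to the \emph{complete} linear system $H^0(\mathcal O(d))$, not to the subsystem $V_d$ of sections vanishing on $Z$; the closed-point sieve that proves it does not transfer automatically, because the local densities change when you impose the containment $Z\subseteq D$. Making \enquote{$D\cap(X\setminus Z)$ smooth and $D$ smooth at the $\eta_i$} hold with positive density for $D$ drawn from $V_d$ requires the refined \enquote{Bertini smoothness with prescribed containment} theorem --- this is precisely what the paper gets from Wutz (building on Poonen's follow-up on hypersurface sections containing a given subscheme). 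Without a citation to such a result, the finite-field branch of your proof does not close.

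A second, smaller issue is the density bookkeeping: even granting a Wutz-type theorem, the smoothness condition only holds with \emph{positive} density $\mu>0$, not density $1$ as you assert. So every other condition must be shown to have density tending to $1$. This is automatic for $(\Irr)$ via Charles--Poonen, but for the $Y_j$-avoidance your argument only shows the bad locus is a proper linear subspace of $V_d$, which over $\F_q$ gives density at most $1/q$ for the bad set --- bounded away from $0$, not tending to $0$. For positive-dimensional $Y_j$ the paper shows the codimension of $\ker(V_d\to H^0(Y_j,\mathcal I_{Z\cap Y_j}(dH)))$ tends to infinity with $d$, hence density $0$; for zero-dimensional $Y_j$, avoidance is folded into the Wutz hypotheses. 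Your version of (c) (nonzero restriction) needs to be strengthened to the surjectivity statement the paper uses, and the two cases for $\dim Y_j$ should be separated.

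Finally, over infinite $k$ in positive characteristic, \enquote{global generation} alone is not sufficient for Bertini smoothness --- you implicitly need the map $\varphi_d$ to be unramified (e.g.\ a locally closed immersion) on $X\setminus Z$, which holds for $d\gg 0$ but should be said; this matches the Jouanolou hypotheses the paper invokes.

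Filling these in would make your more hands-on argument work, and it would be somewhat more self-contained than the paper's reliance on Wutz's packaged theorem, but as written the finite-field case is not established.
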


\begin{proof}
Write $X\sing$ and $Z\sing$ for the singular (non-smooth) loci of
$X \to \Spec k$ and $Z \to \Spec k$
respectively. By assumption, $X \setminus X\sing$ contains $U = X
\setminus Z$, as well as the generic points of $Z$. Since $Z$ is
geometrically reduced, $Z\sing$ has dense open complement. Thus, the
open subset $W = X \setminus (X\sing \cup Z\sing)$ is smooth of
dimension $n$, contains $U$, and $W \cap Z$ is smooth and dense. Let
$Y$ be the union of the zero-dimensional $Y_j$, and note that $Y
\cap Z = \varnothing$.

If $k$ is infinite, choose $d \gg 0$ such that $\mathcal I_Z(dH)$ is
globally generated, and let $V$ be the image of $H^0(X,\mathcal
I_Z(dH))$ in $H^0(X,\mathcal O_X(dH))$, i.e.\ $|V|$ is the linear
system in $|dH|$ of sections vanishing on $Z$. Since $\mathcal
I_Z(dH)$ is globally generated, the base locus of $|V|$ is exactly
$Z$, so $|V|$ defines a morphism
\[
\phi \colon U \to \P\big(V^*\big).
\]
Note that $\phi$ is a composition of a locally closed immersion and
a coordinate projection, hence unramified. Thus
\cite[Corollary~I.6.11(2,3)]{JouBer} shows that the locus of $D \in |V| =
\P(V)(k)$ such that $D \cap U$ is smooth and satisfies $(\Irr)$ is
dense open.

Since the $Y_j$ are not contained in $Z$, the locus of $D \in |V|$
not containing $Y_j$ is dense open. Similarly, the locus of $s \in
H^0(X,\mathcal I_Z(dH))$ whose restriction to $\mathcal I_Z/\mathcal
I_Z^2(dH)$ is nonzero is dense open. Thus we can find a $k$-point of
$\P(V)$ satisfying all desired properties, since a dense open subset
of $\P(V)$ has a $k$-point if $k$ is infinite.

If $k$ is finite, we apply \cite[Theorem~2.1]{Wutz}, where Wutz's
$X,Y,Z,k,\ell,m$ are our $W, Y, Z, \ell-1, \ell, n$ respectively.
This shows that the set
\[
\left\{D \in |dH|\ \Bigg|\ \begin{array}{l} Z \subseteq D \text{ and
} Y \cap D = \varnothing, \\ \dim (D \cap W)_{\operatorname{sing}}
\leq \ell - 1,  \\ D \cap (X \setminus Z) \text{ is smooth of
dimension } n-1. \end{array}\right\}
\]
has positive density $\mu > 0$ as $d \to \infty$. 
%
In particular, such $D$ are smooth at the generic points of $Z$,
since $Z$ has (pure) dimension $\ell$. Applying Bertini's
irreducibility theorem \cite[Theorem~1.2]{ChaPoo} to $X \setminus Z$
shows that the set of $D$ such that $D \setminus Z$ satisfies
$(\Irr)$ in $X \setminus Z$ has density $1$.

Finally, if $Y_j$ is positive-dimensional, then the ideal sheaf
$\mathcal I_{Z \cap Y_j} \subseteq \mathcal O_{Y_j}$ is nonzero
because $Y_j \not\subseteq Z$. The map $\mathcal I_Z \to \mathcal
I_{Z \cap Y_j}$ is surjective by the second and third isomorphisms
theorems.
Hence, for $d \gg 0$, the map
\[
\phi_d \colon H^0(X,\mathcal I_Z(d)) \to H^0(Y_j,\mathcal I_{Z \cap
Y_j}(d))
\]
is surjective. The dimension of the right hand side is (eventually)
a polynomial of degree $\dim Y_j > 0$ in $d$, so
\[
\codim \ker (\phi_d) \to \infty \hspace{2em}\text{ as } d \to
\infty.
\]
Hence, the functions that vanish on $Y_j$ have density $0$ as $d \to
\infty$. Therefore, the intersection of the three sets has positive
density $\mu$.
\end{proof}

\begin{Lemma}\label{Lem Bertini linkage}
Let $k$ be a perfect field, and let $(X,H)$ be a smooth projective
$k$-scheme all of whose components have dimension $n$. Let $Z_1,
\ldots, Z_r \subseteq X$ and $Y_1,\ldots, Y_s \subseteq X$ be
pairwise distinct integral subschemes of codimension $m \neq 0,n$.
%
Then for $d_1,\ldots,d_m \gg 0$, there exist sections
$D_1,\ldots,D_m$ of $|d_1H|,\ldots,|d_mH|$ intersecting properly
such that \vspace{-.3em}
\[
\left[\bigcap_{i=1}^m D_i\right] = \sum_{i = 1}^r [Z_i] + [Z],
\]
where $Z \subseteq X$ is (geometrically) reduced, satisfies
$(\Irr)$, does not contain any of the $Z_i$ and $Y_j$, and $Z
\setminus \bigcup Z_i$ is smooth.
\end{Lemma}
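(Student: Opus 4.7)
The plan is to construct $D_1, \ldots, D_m$ sequentially by iterating \boldref{Lem Bertini containing subvariety}, cutting dimension down by one at each step while preserving the structure along $\bigcup_j Z_j$. Set $W_0 := X$ and $W_k := D_1 \cap \cdots \cap D_k$; at each step I will arrange:
\begin{enumerate}[label=\textup{(\roman*)}]
\item $W_k$ has pure codimension $k$ in $X$ and contains $\bigcup_j Z_j$;
\item $W_k$ is smooth away from $\bigcup_j Z_j$ and smooth at the generic points of $\bigcup_j Z_j$;
\item $W_k \setminus \bigcup_j Z_j$ satisfies $(\Irr)$ as a subscheme of $X \setminus \bigcup_j Z_j$;
\item no $Y_i$ is contained in $W_k$.
\end{enumerate}

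Given $W_{k-1}$ satisfying (i)--(iv), I apply \boldref{Lem Bertini containing subvariety} to the quasi-projective scheme $W_{k-1}$ (of pure dimension $n - k + 1 \geq 2$ using $m \leq n - 1$, and satisfying the smoothness hypotheses by (ii)), with the role of $Z$ played by $\bigcup_j Z_j$ (geometrically reduced since $k$ is perfect, of pure dimension $n - m \neq 0, n - k + 1$) and the role of the excluded $Y_i$'s played by our $Y_1, \ldots, Y_s$ when $k = 1$ and by $\varnothing$ otherwise (property (iv) at later steps follows automatically from $W_k \subseteq W_{k-1}$). For $d_k \gg 0$ this produces a section $D_k' \in |d_k H|_{W_{k-1}}$ containing $\bigcup_j Z_j$, smooth away from and at the generic points of $\bigcup_j Z_j$, with $(\Irr)$ for $D_k' \setminus \bigcup_j Z_j$ in $W_{k-1} \setminus \bigcup_j Z_j$, and avoiding the $Y_i$'s. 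Enlarging $d_k$, Serre vanishing on $\mathcal I_{W_{k-1}}$ shows that the restriction
\[
H^0\bigl(X, \mathcal I_{\bigcup_j Z_j}(d_k H)\bigr) \twoheadrightarrow H^0\bigl(W_{k-1}, \mathcal I_{\bigcup_j Z_j,\,W_{k-1}}(d_k H)\bigr)
\]
is surjective, so $D_k'$ lifts to a section $D_k \in |d_k H|$ on $X$ with $D_k \cap W_{k-1} = D_k'$. Then $W_k = D_k'$ inherits (i)--(iv); in particular, $(\Irr)$ for $W_k \setminus \bigcup_j Z_j$ in $X \setminus \bigcup_j Z_j$ is obtained by composing the $(\Irr)$ for $D_k'$ in $W_{k-1}$ with the inductive one for $W_{k-1}$.

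At the final step $k = m$, each $Z_j$ has codimension $m$ and is therefore a component of $W_m$, of multiplicity $1$ by smoothness at its generic point. Condition (iii) forces the remaining components to assemble into a closed subscheme $Z$ satisfying $(\Irr)$, each of multiplicity $1$ by (ii) at their (necessarily non-$\bigcup_j Z_j$) generic points, with $Z \setminus \bigcup_j Z_j$ smooth and containing no $Y_i$ by (iv). Therefore $\bigl[\bigcap_i D_i\bigr] = \sum_j [Z_j] + [Z]$, as required.

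The critical point is that the intermediate schemes $W_{k-1}$ are singular only along $\bigcup_j Z_j$ and regular at its generic points --- exactly the scope of \boldref{Lem Bertini containing subvariety}. This is what allows the induction to close: each $(\Irr)$ survives as the intersection is cut down one dimension at a time. A secondary technicality is that the Serre vanishing threshold for lifting $D_k'$ depends on the previously chosen $D_1, \ldots, D_{k-1}$, so ``$d_1, \ldots, d_m \gg 0$'' should be interpreted as a sequential choice.
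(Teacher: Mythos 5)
Your proof is correct and takes the same inductive approach as the paper: apply \boldref{Lem Bertini containing subvariety} $m$ times to the successive intersections $W_{k-1}$, preserving smoothness along $\bigcup_j Z_j$ and $(\Irr)$ at each step, then read off multiplicity one from smoothness at the generic points. The one superfluous (and slightly imprecise, since $X$ is only quasi-projective and so Serre vanishing does not directly apply) step is the lifting via $H^0(X,\mathcal I_{\bigcup_j Z_j}(d_kH)) \twoheadrightarrow H^0(W_{k-1},\mathcal I_{\bigcup_j Z_j, W_{k-1}}(d_kH))$; in the intended reading of $|d_kH|$ (matching \cite{Wutz} and \cite{ChaPoo}, where divisors are cut out by degree-$d_k$ forms on $\P^N$ via the very ample embedding), the section produced on $W_{k-1}$ is already the restriction of a global form and so automatically defines the required $D_k$ on $X$, making the lifting trivial.
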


\begin{proof}
Let $Z' = \bigcup Z_i$. We apply Lemma \ref{Lem Bertini containing
subvariety} inductively on the codimension $m$ of the $Z_i$ to find
sections $D_i$ of $|d_iH|$ containing $Z'$ such that $\bigcap D_i$
is smooth away from $Z'$ and at the generic points of $Z'$ and does
not contain any of the $Y_j$, and $\bigcap D_i \setminus Z'$
satisfies $(\Irr)$. Thus the multiplicity of $\bigcap D_i$ at the
generic points of $Z'$ is $1$, so
\[
\left[\bigcap_{i=1}^m D_i\right] = \sum_{i = 1}^r [Z_i] + \alpha
\]
for an effective cycle $\alpha$ none of whose components is
contained in $Z'$.

If $Z$ is the closure of $\bigcap D_i \setminus Z'$, then
restricting to $X \setminus Z'$, we find that $\alpha = [Z]$, where
all coefficients are $1$ since $\bigcap D_i \setminus Z'$ is smooth.
Then $Z$ is geometrically reduced by \cite[Tag
\href{https://stacks.math.columbia.edu/tag/020I}{020I}]{Stacks}.
Finally, $Z$ satisfies $(\Irr)$ because $\bigcap D_i \setminus Z'$
does, and it does not contain any of the $Y_j$ since $\bigcap D_i
\setminus Z'$ does not contain any $Y_j \setminus Z'$ by the choice
of the $D_i$.
\end{proof}

\begin{Thm}\label{Thm difference of irreducible cycles}
Let $k$ be a perfect field, let $(X,H)$ be a smooth projective
$k$-scheme of equidimension $n$, let $\alpha \in \CH^m(X)$ be a pure
dimensional cycle, and let $e_0 \in \Z$. If $m \neq 0,n$, then there
exists a (geometrically) reduced subscheme $Z \subseteq X$
satisfying $(\Irr)$ and $e \geq e_0$ such that
\[
\alpha = [Z] - eH^m \in \CH^m(X).
\]
\end{Thm}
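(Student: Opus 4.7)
The plan is to reduce $\alpha$ to a single irreducible class modulo a multiple of $H^m$ by iterating \boldref{Lem Bertini linkage}, making essential use of the $Y_j$-avoidance feature to keep all auxiliary irreducibles produced along the way pairwise distinct. Write $\alpha = \sum_{i=1}^r n_i [V_i]$ with the $V_i$ distinct integral subschemes of codimension $m$ and $n_i \in \Z \setminus \{0\}$.

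First I would prove an auxiliary claim: any sum $\sum_{j=1}^s [W_j]$ of finitely many distinct integral codimension-$m$ subschemes of $X$ can be written as $[Z] - eH^m$ with $Z$ satisfying $(\Irr)$ and $e \in \Z$. Indeed, one application of \boldref{Lem Bertini linkage} to $\{W_1, \ldots, W_s\}$ yields $\sum_j [W_j] + [W'] = dH^m$, and a second application to the singleton $\{W'\}$, with the $W_j$ listed among the $Y$'s, produces $[W'] = d'H^m - [Z]$ with $Z$ satisfying $(\Irr)$; substituting gives $\sum_j [W_j] = [Z] - (d'-d)H^m$.

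To deal with the multiplicities, for each $i$ apply \boldref{Lem Bertini linkage} to the singleton $\{V_i\}$ a total of $|n_i|$ times with fresh degree parameters, listing among the $Y$'s all $V_j$ with $j \neq i$ and all previously produced residual irreducibles. This yields $[V_i] + [W_i^{(k)}] = d_i^{(k)} H^m$ for $k = 1, \ldots, |n_i|$ with pairwise distinct $W_i^{(k)}$ avoiding every other integral subscheme currently in play. Summing over $k$ and then over $i$, separating by the sign of $n_i$, expresses
\[
\alpha = c H^m - S_+ + S_-,
\]
where $S_+$ (resp.\ $S_-$) is the reduced effective cycle supported on the distinct integral $W_i^{(k)}$ with $n_i > 0$ (resp.\ $n_i < 0$), and $c \in \Z$.

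Applying the auxiliary claim to $S_+$ and to $S_-$ separately, using $Y$-avoidance to keep the two resulting irreducibles $A$ and $B$ distinct, expresses $\alpha$ as $[B] - [A] + c_1 H^m$ for some $c_1 \in \Z$. One further application of \boldref{Lem Bertini linkage} to $\{A\}$, with $B$ among the $Y$'s, gives $[A] + [U] = dH^m$ with $U$ satisfying $(\Irr)$ and $U \neq B$, so $-[A] = [U] - dH^m$ and hence $\alpha = [U] + [B] + c_2 H^m$; a last application of the auxiliary claim to the reduced cycle $[U] + [B]$ collapses it to $[Z] - eH^m$ with $Z$ satisfying $(\Irr)$, yielding $\alpha = [Z] - \tilde e H^m$ as required. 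The main obstacle is that \boldref{Lem Bertini linkage} only produces a residual of multiplicity one for each prescribed subvariety, so $n_i [V_i]$ with $|n_i| > 1$ cannot be treated by a single linkage; the workaround of performing $|n_i|$ independent linkages is only viable because the $Y_j$-avoidance feature keeps the residual irreducibles genuinely distinct, so that the auxiliary claim can be applied at the final stage to collapse the accumulated reduced cycles.
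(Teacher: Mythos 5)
Your proof is correct and relies on the same key tool as the paper's argument, namely \boldref{Lem Bertini linkage} together with its $Y_j$-avoidance clause, and on the same strategy of linking away multiplicities and then collapsing the accumulated reduced cycles into a single subscheme satisfying $(\Irr)$. The paper's bookkeeping is different — a single descending induction on the excess multiplicity $z = \sum(n_i-1)+\sum(n'_i-1)$, reducing one unit per linkage and then finishing with two more linkages, rather than performing $|n_i|$ separate linkages per term and invoking an auxiliary collapse claim three times — but this is an organizational rather than a conceptual difference, and your version is equally valid.
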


\begin{proof}
Write $\alpha = \sum_i n_i[Z_i] - \sum_i n_i'[Z_i'] - eH^m$, where
$Z_i, Z_i' \subseteq X$ are pairwise distinct integral subschemes of
codimension $m$, and $n_i, n_i' \in \Z_{\geq 0}$ (to start with, we
may take $e = 0$). We will apply Lemma \ref{Lem Bertini linkage} a
few times.

First, by induction on $z = \sum (n_i - 1) + \sum (n'_i - 1)$, we
will reduce to the case where $z = 0$. Indeed, if $z > 0$, then one
of the $n_i$ or $n_i'$ is $\geq 2$. Say $n_1 \geq 2$; the case $n'_i
\geq 2$ is similar. Applying Lemma \ref{Lem Bertini linkage} to
$Z_1$ with $\{Y_j\} := \{Z_i\ |\ i > 1\} \cup \{Z_i'\}$, we can
write $[Z_1] = dH^m - [Z']$, where $Z'$ does not contain any of the
$Z_i$ and $Z_i'$, and $Z'$ is generically smooth (in particular
reduced).
%
Adjoining the irreducible components of $Z'$ to the $Z_i'$ and
changing $e$ to $e-d$, we have reduced $z$ by one, because the new
components coming from $Z'$ all have coefficient $1$. After finitely
many steps, we get $z = 0$, so all $n_i$ and $n'_i$ are equal to
$1$.

Now applying Lemma \ref{Lem Bertini linkage} to the $Z_i$ while
avoiding the $Z'_i$, we get a generically smooth subscheme $Z'
\subseteq X$ of codimension $m$ and such that $\sum_i [Z_i] = d H^m
- [Z']$. Adjoining the components of $Z'$ to the $Z_i'$ and
replacing $e$ by $e-d$, we can write
\[
\alpha = - \sum_i [Z_i'] - eH^m.
\]
Finally, applying Lemma \ref{Lem Bertini linkage} to the $Z_i'$, for
every $d_1,\ldots,d_m \gg 0$ we get a geometrically reduced
subscheme $Z$ of codimension $m$ satisfying $(\Irr)$ such that $d
H^m = \sum_i [Z_i'] + [Z]$, where $d = \prod d_i$. Thus,
\[
\alpha = [Z] - (e+d)H^m.
\]
Choosing the $d_i$ large enough so that $e+d \geq e_0$ gives the
result.
\end{proof}

\begin{Cor}\label{Cor difference of irreducible cycles}
Let $k$ be a perfect field, let $(X,H)$ be a smooth projective
$k$-scheme of equidimension $n$, and let $\alpha \in \CH^m(X)$. If
$m \neq 0, n$, then there exist geometrically reduced subschemes
$Z_1,Z_2 \subseteq X$ satisfying $(\Irr)$ such that
\[
\alpha = [Z_1] - [Z_2] \in \CH^m(X).
\]
\end{Cor}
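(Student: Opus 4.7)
The plan is to combine \boldref{Thm difference of irreducible cycles} with two further applications of \boldref{Lem Bertini linkage}, matched via a small Diophantine adjustment.

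First I would apply \boldref{Thm difference of irreducible cycles} to $\alpha$ to obtain a subscheme $Z \subseteq X$ satisfying $(\Irr)$ together with an integer $e \in \Z$ such that $\alpha = [Z] - eH^m$ in $\CH^m(X)$. The task then reduces to trading the correction $-eH^m$ for $[Z_1] - [Z_2]$ with $(\Irr)$-subschemes $Z_1, Z_2$, where $Z_1$ absorbs a large positive multiple of $H^m$ and $Z_2$ is produced by ``linking'' $Z$ against another positive multiple of $H^m$.

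To this end I would apply \boldref{Lem Bertini linkage} twice, with independent choices of degrees. First, applied with the finitely many integral $k$-components of $Z$ as the list of prescribed subvarieties and degrees $d_1, \ldots, d_m \gg 0$, it yields a subscheme $Z_2 \subseteq X$ satisfying $(\Irr)$ with
\[
[Z] + [Z_2] = d_1 \cdots d_m \cdot H^m \quad \text{in } \CH^m(X).
\]
Secondly, by iterated Bertini smoothness and irreducibility (which is the $r = 0$ case of \boldref{Lem Bertini linkage} — the proof goes through with no prescribed subvariety to contain), applied with degrees $d_1', \ldots, d_m' \gg 0$, one obtains $Z_1$ satisfying $(\Irr)$ with $[Z_1] = d_1' \cdots d_m' \cdot H^m$. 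Then
\[
[Z_1] - [Z_2] = [Z] + (d_1' \cdots d_m' - d_1 \cdots d_m) H^m,
\]
which equals $\alpha = [Z] - eH^m$ precisely when $d_1 \cdots d_m - d_1' \cdots d_m' = e$.

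The heart of the argument is therefore the Diophantine step: finding $d_i, d_i' \gg 0$ satisfying the above identity. Fixing a large integer $N$ and taking $d_1 = \cdots = d_{m-1} = N$, $d_1' = \cdots = d_{m-1}' = N+1$ reduces it to the linear equation $N^{m-1} d_m - (N+1)^{m-1} d_m' = e$, which is solvable in positive integers $(d_m, d_m')$ that can both be made arbitrarily large, since $\gcd(N^{m-1},(N+1)^{m-1}) = 1$. The case $m = 1$ is trivial. The main obstacle in writing this out carefully is the pair of mild extensions of \boldref{Lem Bertini linkage} needed — allowing $r = 0$ (no prescribed subvariety) and allowing the prescribed ``subvariety'' to be the subscheme $Z$, which satisfies $(\Irr)$ but need not be integral when $X$ is not geometrically irreducible — both of which follow from the proof of that lemma with essentially no change.
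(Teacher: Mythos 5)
Your proof is correct, and the route you take differs from the paper's in a way that is actually more careful. The paper also starts from \boldref{Thm difference of irreducible cycles} to write $\alpha = [Z_1] - eH^m$, but then argues that $e$ may be taken arbitrarily large and directly applies Bertini irreducibility to realise $eH^m$ as $[Z_2]$ for an $(\Irr)$-subscheme $Z_2$, claiming that over a finite field ``there exists $e_0$ such that for all $e \geq e_0$'' this can be done. As stated this is not quite right for $m \geq 2$: over a finite field the Charles--Poonen density theorem produces complete intersections $D_1 \cap \cdots \cap D_m$ only for \emph{factorisations} $e = d_1' \cdots d_m'$ with all $d_i' \gg 0$, and not every large integer admits such a factorisation (a large prime, for instance, does not). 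Your proof closes exactly this gap: rather than asking for a single large $e$ to factor favourably, you replace $Z_1$ by a freshly built $(\Irr)$-complete intersection and produce $Z_2$ by linking the $Z$ of \boldref{Thm difference of irreducible cycles}, reducing the matching to the linear Diophantine equation $N^{m-1}d_m - (N+1)^{m-1}d_m' = e$, which is solvable with $d_m, d_m' \gg 0$ since $\gcd\!\left(N^{m-1},(N+1)^{m-1}\right)=1$. The mild extensions of \boldref{Lem Bertini linkage} you flag (the $r=0$ case, and feeding in the components of an $(\Irr)$-subscheme rather than a single integral one) are indeed harmless, and are implicitly used by the paper as well. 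In short, your argument is a valid and somewhat more explicit variant, and the Diophantine step you supply is genuinely needed to make the finite-field case airtight.
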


\begin{proof}
By Theorem \ref{Thm difference of irreducible cycles}, we may write
$\alpha = [Z_1] - eH^m$, where $Z_1$ satisfies $(\Irr)$ and $e$ may
be taken arbitrarily large. Applying the usual Bertini
irreducibility theorem \cite[Theorem~6.10(4)]{JouBer},
\cite[Corollary~1.4]{ChaPoo} inductively, we find a subscheme $Z_2
\subseteq X$ satisfying $(\Irr)$ with $[Z_2] = eH^m$. Indeed, over
an infinite field we can do this for any $e$
\cite[Theorem~6.10(4)]{JouBer}, whereas over a finite field the
positivity of the density \cite[Corollary~1.4]{ChaPoo} shows that there
exists $e_0$ such that for all $e \geq e_0$ we can find a member
satisfying $(\Irr)$.
\end{proof}

\begin{Thm}\label{Thm cycle class map}
Let $k$ be an algebraically closed field. Let $X$ be a smooth
projective $k$-scheme, and assume that $\Dim\cs(U)$ holds for every
open subscheme $U \subseteq X$. Then $\Cl(X)$ holds.
\end{Thm}

\begin{proof}
Since $\CH^*_\Q(U \amalg V) = \CH^*_\Q(U) \times \CH^*_\Q(V)$, and
the same statement holds for the cohomology ring, we may assume $X$
is irreducible of dimension $n$, hence (geometrically) integral.
Let $\alpha \in \CH^*_\Q(X)$ be given. Because the cycle class map
is homogeneous, it suffices to treat the case where $\alpha \in
\CH^m_\Q(X)$ is of pure dimension $d = n-m$. If $m = 0$ or $m = n$,
then clearly the kernels of the cycle class maps to $H^m(X)$ and
$\H^m(X)$ agree.

Now assume $m \neq 0,n$. Then by Corollary \ref{Cor difference of
irreducible cycles}, we may write $\alpha = [Z_1] - [Z_2]$, where
$Z_1$ and $Z_2$ are reduced subschemes satisfying $(\Irr)$; since
$X$ is integral this just means that $Z_1$ and $Z_2$ are integral as
well. If $\alpha = 0$ there is nothing to prove, so we may assume
$[Z_1] \neq [Z_2]$.  Let $Z = Z_1 \cup Z_2 \subseteq X$, let $U = X
\setminus Z$, and consider the long exact cohomology sequence with
compact support of \ref{Ax compactly supported cohomology}:
\[
\ldots \to H^i\cs(U) \to H^i\cs(X) \to H^i\cs(Z) \to \ldots.
\]
By \ref{Ax vanishing}, we have $\dim H\cs^{2d}(Z) = 2$ and
$H\cs^i(Z) = 0$ for $i > 2d$. Thus, we have an exact sequence
\begin{equation}
\ldots \to H\cs^{2d}(X) \stackrel{i^*}\to H\cs^{2d}(Z) \to
H\cs^{2d+1}(U) \to H\cs^{2d+1}(X) \to 0.\label{Eq long exact
sequence U}
\end{equation}
By \ref{Ax vanishing} again, the map $i^* \colon H\cs^{2d}(X) \to
H^{2d}\cs(Z) \cong K \oplus K$ is the dual of
\begin{align*}
K \oplus K &\rA H^{2m}(X)\\
(\lambda,\mu) &\longmapsto \lambda\cl(Z_1) + \mu\cl(Z_2).
\end{align*}
If $h$ is an ample divisor class on $X$, then $h^d \cdot Z_i > 0$.
Therefore, $\cl(Z_i) \neq 0$, so the rank of $i^*$ is either $1$ or
$2$. Additivity of dimensions in \eqref{Eq long exact sequence U}
gives
\[
\rk(i^*) = \dim H\cs^{2d}(Z) - \dim H\cs^{2d+1}(U) + \dim
H\cs^{2d+1}(X).
\]
By assumption, $\dim H^{2d+1}\cs(U)$ and $\dim H^{2d+1}\cs(X)$ are
independent of the Weil cohomology theory $H$, hence so is the rank
of $i^*$.

We now claim that $\cl_H(\alpha) = 0$ if and only if $\rk(i^*) = 1$
and $h^d \cdot Z_1 = h^d \cdot Z_2$. Indeed, if $\cl(\alpha) = 0$,
then $(i^*)^\vee$ has a kernel, so $i^*$ cannot have rank $2$.
Moreover, cupping the relation $\cl(Z_1) = \cl(Z_2)$ with $h^d$
gives $h^d \cdot Z_1 = h^d \cdot Z_2$. Conversely, if $\rk(i^*) = 1$
and $h^d \cdot Z_1 = h^d \cdot Z_2$, then there is a unique
$[\lambda_H:\mu_H] \in \P^1(K)$ such that
\[
\lambda_H\cl_H(Z_1) = \mu_H\cl_H(Z_2).
\]
Again, cupping with $h^d$ gives $\lambda_H Z_1 \cdot h^d = \mu_H Z_2
\cdot h^d$, forcing $[\lambda_H:\mu_H] = [1:1]$, so that
$\cl_H(\alpha) = 0$.

Because the rank of $i^*$ and the intersection numbers $h^d \cdot
Z_i$ are independent of the Weil cohomology theory $H$, this shows
that the vanishing of $\cl_H(\alpha)$ is also independent of $H$.
\end{proof}

\begin{Cor}
Let $k$ be an algebraically closed field. If $\Dim_c(X)$ holds for
every smooth quasi-projective $k$-scheme $X$, then $\Cl(X)$ holds
for every smooth projective $k$-scheme $X$. \hfill \qedsymbol
\end{Cor}


\end{document}